\documentclass[11pt]{article}
\usepackage{amsmath,amsfonts,amssymb,amsthm,fullpage}
\input xy
\xyoption{all}


\newcommand{\RR}{\mathbb R}
\newcommand{\ZZ}{\mathbb Z}
\newcommand{\QQ}{\mathbb Q}
\newcommand{\CC}{\mathbb C}

\newcommand{\C}{\mathcal C}


\def\TT{\mathbb{T}}
\def\A{\mathcal{A}}


\newcommand{\sheaf}{\mathcal{O}}
\newcommand{\map}{\longrightarrow}


\newcommand{\beq}{\begin{eqnarray}}
\newcommand{\beqn}{\begin{eqnarray*}}
\newcommand{\eeq}{\end{eqnarray}}
\newcommand{\eeqn}{\end{eqnarray*}}

\newtheorem{thm}{Theorem}

\newtheorem{lem}[thm]{Lemma}
\newtheorem{prop}[thm]{Proposition}
\newtheorem{cor}[thm]{Corollary}
\newtheorem{ex}{Example}
\newtheorem{defn}[thm]{Definition}
\theoremstyle{remark}
\newtheorem{rem}[thm]{Remark}
\newtheorem*{theorem*}{Theorem}



\def\d{\mathtt{\delta}}
\def\A{\mathcal{A}}
\def\FrVec{{\sf FrVect}}

\def\Ann{\textup{Ann}}
\def\BB{\mathcal B}
\def\Coh{\textup{Coh}}
\def\coker{\textup{coker}}
\def\deg{\textup{deg}}

\def\F{{\sf F}}
\def\tan{\textup{tan}}
\def\arctan{\textup{arctan}}
\def\K{\textup{K}}

\def\I{\textup{I}}

\def\mod{\textup{mod}}
\def\ssz{\sf SS_{\mathcal Z}}
\def\End{\textup{End}}
\def\Pic{\textup{Pic}}

\def\Ext{\textup{Ext}}
\def\Hom{\textup{Hom}}

\def\Mod{{\sf Mod}}
\def\op{\textup{op}}
\def\NF{{\sf NF}}
\def\Rep{{\sf Rep}}

\def\rk{\textup{rk}}
\def\rwd{\textup{wd}}
\def\Sheaf{\mathcal}
\def\SS{\mathbb{S}}
\def\cS{\mathcal{S}}
\def\Vec{{\sf Vect}}
\def\trans{{\bf{T}}}

%

%

\title{Noncommutative tori and the Riemann--Hilbert correspondence}
\author{Snigdhayan Mahanta$^1$ and Walter D. van Suijlekom$^2$\\[4mm]
$^1$ Department of Mathematics, University of Toronto, \\
40 St George Street, Room 6290,
Toronto, ON M5S 2E4, Canada\\[3mm]
$^2$Institute for Mathematics, Astrophysics and Particle Physics\\
Faculty of Science, Radboud Universiteit Nijmegen\\
Toernooiveld 1,
6525 ED Nijmegen, The Netherlands
}

\date{\today}

\begin{document}

\maketitle

\begin{abstract}
We study the interplay between noncommutative tori and noncommutative elliptic curves through a category of equivariant differential modules on
$\CC^*$.  We functorially relate this category to the category of
holomorphic vector bundles on noncommutative tori as introduced by Polishchuk
and Schwarz and study the induced map between the corresponding K-theories. In addition, there is a forgetful functor to the category of noncommutative elliptic curves of Soibelman and Vologodsky, as well as the forgetful functor to the category of vector bundles on $\CC^*$ with regular singular connections. 

The category that we consider has the nice property of being a Tannakian category, hence it is equivalent to the category of representations of an affine group scheme.  Via an equivariant version of the Riemann--Hilbert correspondence we determine this group scheme to be (the algebraic hull of) $\ZZ^2$. We also obtain a full subcategory of the category of holomorphic bundles on the noncommutative torus, which is equivalent to the category of representations of $\ZZ$. This group is the proposed topological fundamental group of the noncommutative torus (understood as a degenerate elliptic curve) and we study Nori's notion of \'etale fundamental group in this context.

\end{abstract}

\begin{center}
{\Large {\bf Introduction}}
\end{center}

Noncommutative geometry in its various forms has come to the forefront of
mathematical research lately and noncommutative tori constitute perhaps the
most extensively studied class of examples of noncommutative differentiable manifolds. They were introduced by
Connes during the early eighties \cite{Con80} and were
systematically studied by Connes \cite{Con80}, Rieffel \cite{Rie,Rie2} and others. Recently
Polishchuk and Schwarz have provided a new perspective on them which is
quite amenable to techniques in algebraic geometry \cite{PolSch, Pol2}. At the same time Soibelman and
Vologodsky have introduced noncommutative elliptic curves
as certain equivariant categories of coherent sheaves \cite{SoiVol}. The guiding principle
behind both constructions is replacing a mathematical object by its category
of appropriately defined representations, {\it viz.,} vector bundles
with connections in the former case, denoted by $\Vec(\TT_\theta^\tau)$, and
coherent sheaves in the latter, denoted by $\BB_q$, where $q=e^{2 \pi i \theta}$ and $\theta$ is an irrational number.  

In this article we try to connect the above two constructions by introducing an intermediate category $\BB_q^\tau$. 
Besides the existence of a forgetful functor from $\BB_q^\tau$ to $\BB_q$ (as
the notation might suggest), we construct a faithful and exact functor from
$\BB_q^\tau$ to $\Vec(\TT_\theta^\tau)$. One may view $\BB_q^\tau$ as a sort of categorical `correspondence' between $\Vec(\TT_\theta^\tau)$ and $\BB_q$. 
The category $\BB_q^\tau$ turns out to be a Tannakian category. Via an equivariant version of the Riemann--Hilbert
correspondence we show that it is equivalent to the category of finite
dimensional representations of (the algebraic hull of) $\ZZ^2$ (see Theorem
\ref{thm:RH}). This allows us to describe the K-theory of
$\BB_q^\tau$ as the free abelian group generated by two copies of $\CC^*$ (see Corollary \ref{ktheory}). 

The final part of the paper is a little speculative in nature, in which we propose a fundamental group for the noncommutative torus, both topological and \'etale. For the latter, Nori's approach \cite{Nor1,Nor2} to \'etale fundamental groups of smooth quasiprojective curves involving Tannakian categories plays a central motivating role.

This paper is organized as follows.
In the first section we briefly review the main results of \cite{PolSch}, including the basic definitions and examples. We also discuss the rudiments of noncommutative tori, which are relevant for
our purposes as it is known that there are several ways of looking at them. We
also show that there is a certain {\it modularity} property satisfied by the
categories $\Vec(\TT_\theta^\tau)$ (see Proposition \ref{modularity}).

In the second section we first provide a motivation for the definition of the
categories $\BB_q^\tau$ and then construct a faithful and exact functor from
$\BB_q^\tau$ into $\Vec(\TT_\theta^\tau)$. We also give a description of the
image of our functor and discuss the induced map on the K-theories of the
corresponding categories. 

In the third section we start by briefly recalling some preliminaries of
Tannakian categories. 
We explain the structure of a Tannakian
category on the category $\BB_q^\tau$ and prove an equivariant version of
the Riemann--Hilbert correspondence on $\CC^*$. Via this correspondence, we
find that $\BB_q^\tau$ is equivalent to the category of finite dimensional representations of $\ZZ^2$. As a consequence we are able to compute the
K-theory of $\BB_q^\tau$.

We conclude with the proposal for the fundamental group of the noncommutative torus as alluded to before. After a short discussion on Nori finite bundles involving stability conditions, we establish a full subcategory $\BB^\tau$ of $\Vec(\TT_\theta^\tau)$ which is a Tannakian category, and equivalent to $\Rep(\ZZ)$. We define a Tannakian subcategory of $\BB^\tau$, whose associated group scheme is our proposal for the \'etale fundamental group of the noncommutative torus. 
 

\medskip

\noindent
{\bf Convention.} In this article, unless otherwise stated, $\theta$ is always
assumed to be irrational and $\tau$ in the lower half plane as in
\cite{PolSch}. The ground field is also assumed to be $\CC$.

\section*{Acknowledgements}Both authors would like to thank Matilde Marcolli
for intensive discussions, as well as Masoud Khalkhali, Emanuele Macr\'i and Jorge Plazas for helpful
comments. The first author would also like to thank Vasilisa Shramchenko for
indicating the reference \cite{Was} and Christian Kaiser, E. Lee Lady and
Fernando Muro for several useful discussions. The second author would like to
thank Eli Hawkins, Michael M\"uger and Marius van der Put for helpful
comments. Both authors would also like to thank Max-Planck-Institut f\"ur
Mathematik, Bonn for hospitality and support. The first author also acknowledges the hospitality of the Fields Institute, Toronto, and the second author the Hausdorff Research Institute for Mathematics, Bonn, where a part of this work was carried out.

\section{Preliminaries}

We recall some basic facts about the structure of the category of holomorphic bundles over noncommutative tori.

\subsection{Holomorphic bundles on noncommutative tori} \label{HolCat}

The noncommutative torus is a particular case of a transformation group
$C^*$-algebra, with $\ZZ$ acting continuously on the $C^*$-algebra $C(\SS^1)$
of continuous functions on the circle. Pimsner and Voiculescu \cite{PV1} and
separately Rieffel \cite{Rie} studied their K-theory, while Connes analysed their differential structure \cite{Con80}. We will work with the smooth noncommutative torus, which is a dense Fr\'echet subalgebra of this transformation group $C^*$-algebra. 

Let $\theta$ be an irrational real number. 
The algebra of smooth functions $\A_\theta$ on the noncommutative torus $\TT_\theta$ consists of elements of the form $\sum_{(n_1,n_2)\in\ZZ^2}a_{n_1,n_2}U_1^{n_1}U_2^{n_2}$ with $(n_1,n_2)\map a_{n_1,n_2}$ rapidly decreasing and $U_1,U_2$ are unitaries satisfying the commutation relation
\begin{equation}
\label{ComReln}
U_2 U_1 = \exp(2\pi i\theta)U_1 U_2
\end{equation}
A less {\it ad hoc} definition of $\A_\theta$ is given as the {\it smooth crossed product} $C^\infty(\SS^1) \rtimes_\theta \ZZ$ with $\ZZ$ acting on $\SS^1$ by rotation over $\theta$; this is the smooth analogue of the aforementioned transformation group $C^*$-algebra. 
The Fourier transform then establishes the isomorphism $\A_\theta \simeq C^\infty(\SS^1) \rtimes_\theta \ZZ$.

\medskip

\noindent
The two basic derivations $\delta_1$ and $\delta_2$ acting on $\A_\theta$ are as follows,
\begin{align*} 
\delta_j \left( \sum_{(n_1,n_2)\in\ZZ^2}a_{n_1,n_2}U_1^{n_1}U_2^{n_2} \right) 
= 2\pi i \sum_{(n_1,n_2)\in\ZZ^2} n_j a_{n_1,n_2}U_1^{n_1}U_2^{n_2}; \qquad (j=1,2).
\end{align*}
Equivalently, one can define $\delta_1$ and $\delta_2$ by $\delta_j(U_i) =
2\pi i \delta_{ij} U_i$ which is then extended to the whole of $\A_\theta$ by
applying the Leibniz rule. 

The derivations $\delta_1$ and $\delta_2$ are the infinitesimal generators of
the action of a commutative torus $\TT^2$ on $\A_\theta$ by
automorphisms. Inside the complexified Lie algebra generated by $\delta_1$ and
$\delta_2$, we are interested in the vector parametrized by two complex numbers $\omega_1$ and $\omega_2$. We denote 
\begin{equation*}
\delta_\omega = \omega_1 \delta_1 +  \omega_2 \delta_2.
\end{equation*}
If $\omega=(\tau,1)$ we also set $\delta_\tau=\delta_\omega$, which is the
so-called {\it complex structure} on $\A_\theta$ as in \cite{CR87}.

\subsubsection{The category of holomorphic bundles on $\TT_\theta$}
The Serre-Swan Theorem establishes an equivalence between the category of vector bundles over a
topological space $M$ and finitely generated projective modules (henceforth,
for brevity, referred to as finite projective modules) over $C(M)$. 
In this spirit, it makes sense to {\it define} vector bundles over
the noncommutative torus $\TT_\theta$ as finite projective
right $\A_\theta$-modules. 

In \cite{Con80}, Connes has constructed finite projective modules over
$\A_\theta$ that are labelled by a tuple $(m,n)\in\ZZ^2$. Later, in \cite{Rie}
Rieffel has shown that this set, in fact, exhausts the complete set of finite projective modules over $\A_\theta$ (up to isomorphism).

We generalize the category considered by Polishchuk and Schwarz slightly by defining the objects of the category
$\Vec(\TT^\omega_\theta)$ to be finite projective right $\A_\theta$-modules carrying a {\it holomorphic
  structure} which is a lifting of $\delta_\omega$. More
precisely, a holomorphic structure on a finite projective $\A_\theta$-module $E$ is given by a
$\CC$-linear connection $\nabla: E\map E$ satisfying the Leibniz rule,
\begin{equation}
\label{leibniz}
\nabla (ea) = \nabla (e)a +
e\delta_\omega (a); \qquad (\forall e\in E, a\in \A_\theta).
\end{equation}
A morphism $h:E\map E'$ is said to be {\it holomorphic} if it commutes with the
connection, {\it i.e.,} $\nabla_{E'} (he)=h\nabla_E (e)$. These are the
morphisms of the category. 

One defines the cohomology groups $ H^0$ (resp. $H^1$) of $\A_\theta$ with
respect to a holomorphic bundle $E$, equipped with a connection $\nabla$, as the kernel (resp. cokernel) of $\nabla$.

If $\omega=(\tau,1)$, then $\Vec(\TT^\omega_\theta)$ reduces to the category of holomorphic bundles $\Vec(\TT^\tau_\theta)$ as introduced in \cite{PolSch}.

\begin{prop} \label{modularity}
\begin{itemize}
\item[(a)] If $g$ is an element in $SL(2,\ZZ)$, then $\Vec(\TT_\theta^{g\omega}) \simeq \Vec(\TT_\theta^\omega)$.
\item[(b)] If $\omega_2 \neq 0$ and $\tau=\tfrac{\omega_1}{\omega_2}$, then $\Vec(\TT_\theta^\omega) \simeq \Vec(\TT_\theta^{\tau})$.
\end{itemize}
\end{prop}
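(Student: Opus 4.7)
The plan for part (a) is to realize the action of $SL(2,\ZZ)$ on the parameter $\omega$ through pullback along an automorphism of $\A_\theta$. For $g=\begin{pmatrix}a&b\\c&d\end{pmatrix}\in SL(2,\ZZ)$, the first step is to define a map $\phi_g\colon \A_\theta\to\A_\theta$ on the generators by $\phi_g(U_1)=U_1^a U_2^c$, $\phi_g(U_2)=U_1^b U_2^d$. A short calculation using the commutation relation \eqref{ComReln} shows $\phi_g(U_2)\phi_g(U_1)=e^{2\pi i\theta(ad-bc)}\phi_g(U_1)\phi_g(U_2)$, and since $ad-bc=1$ the relation is preserved. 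Because the linear map $(m,n)\mapsto (am+bn,cm+dn)$ is a $\ZZ$-lattice automorphism of $\ZZ^2$, the sequence of Fourier coefficients of $\phi_g(f)$ is a permutation (up to phases) of those of $f$, so rapid decay is preserved and $\phi_g$ extends to a continuous $\CC$-algebra automorphism of the Fr\'echet algebra $\A_\theta$, with inverse $\phi_{g^{-1}}$.

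Next I would compute the effect of $\phi_g$ on the derivations. Since $\delta_j(U_1^m U_2^n)=2\pi i\,n_j U_1^m U_2^n$ with $n_1=m,n_2=n$, we find $\delta_\omega(\phi_g(U_1))=2\pi i(a\omega_1+c\omega_2)\phi_g(U_1)$ and likewise $\delta_\omega(\phi_g(U_2))=2\pi i(b\omega_1+d\omega_2)\phi_g(U_2)$, so
\begin{equation*}
\phi_g^{-1}\circ \delta_\omega\circ \phi_g \;=\; \delta_{g^{\mathsf{T}}\omega}
\end{equation*}
on generators, hence on all of $\A_\theta$. Replacing $g$ by $g^{\mathsf{T}}$ (which is again in $SL(2,\ZZ)$), this reads $\phi_{g^{\mathsf T}}^{-1}\delta_\omega\phi_{g^{\mathsf T}}=\delta_{g\omega}$. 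The equivalence of categories is then realized by the pullback functor: to an object $(E,\nabla)\in\Vec(\TT_\theta^{g\omega})$ one associates the module $\phi_{g^{\mathsf{T}}}^*E$ (same underlying abelian group, right action $e\cdot_{\mathrm{new}} a=e\,\phi_{g^{\mathsf T}}(a)$) with the same map $\nabla$; the intertwining identity above converts the Leibniz rule \eqref{leibniz} with respect to $\delta_{g\omega}$ into the Leibniz rule with respect to $\delta_\omega$. Since morphisms in both categories are simply $\A_\theta$-linear maps commuting with $\nabla$, the correspondence is functorial, and $\phi_{(g^{\mathsf T})^{-1}}^*$ provides the inverse.

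For part (b) the argument is essentially a rescaling. When $\omega_2\neq 0$, by definition $\delta_\omega=\omega_1\delta_1+\omega_2\delta_2=\omega_2(\tau\delta_1+\delta_2)=\omega_2\delta_\tau$. Define the functor $F\colon \Vec(\TT_\theta^\omega)\to \Vec(\TT_\theta^\tau)$ by $F(E,\nabla)=(E,\omega_2^{-1}\nabla)$, acting as the identity on morphisms. The Leibniz rule for $\nabla$ becomes
\begin{equation*}
\omega_2^{-1}\nabla(ea)=\omega_2^{-1}\nabla(e)\,a + e\,\omega_2^{-1}\delta_\omega(a) = \omega_2^{-1}\nabla(e)\,a + e\,\delta_\tau(a),
\end{equation*}
so $\omega_2^{-1}\nabla$ is a holomorphic structure for $\delta_\tau$. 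The inverse is $(E,\tilde\nabla)\mapsto (E,\omega_2\tilde\nabla)$, and compatibility with morphisms is automatic since commuting with $\nabla$ is equivalent to commuting with any nonzero scalar multiple of it.

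I expect the main technical nuisance in this proof to be part (a), specifically verifying the smoothness of $\phi_g$ (rapid decay under the lattice change of variables) and pinning down the convention so that the $SL(2,\ZZ)$-action on categories matches the one on the parameter $\omega$ on the nose; the $g$ versus $g^{\mathsf T}$ bookkeeping is the only place where something could go awry, and once the intertwining relation is established, the categorical equivalence is essentially formal. Part (b) is straightforward and poses no obstacle.
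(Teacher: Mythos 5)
Your proof follows essentially the same strategy as the paper's: realize the $SL(2,\ZZ)$-action by an algebra automorphism that conjugates $\delta_\omega$ into $\delta_{g\omega}$, twist the module structure by that automorphism, and handle (b) by rescaling the connection by $\omega_2^{-1}$. The only substantive difference is that you write down the automorphism $\phi_g(U_1)=U_1^aU_2^c$, $\phi_g(U_2)=U_1^bU_2^d$ uniformly for every $g$, whereas the paper only checks the two generators $g_1=\left(\begin{smallmatrix}1&1\\0&1\end{smallmatrix}\right)$ and $g_2=\left(\begin{smallmatrix}0&-1\\1&0\end{smallmatrix}\right)$ of $SL(2,\ZZ)$; your $\phi_{g_i^{\mathsf T}}$ specialize exactly to the paper's $\sigma_i$, and the uniform formula buys a cleaner statement at the price of the $g$ versus $g^{\mathsf T}$ bookkeeping you yourself flagged. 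That bookkeeping does bite once: with your relation $\phi_{g^{\mathsf T}}^{-1}\delta_\omega\phi_{g^{\mathsf T}}=\delta_{g\omega}$, twisting the action by $\phi_{g^{\mathsf T}}$ (i.e.\ $e\cdot a=e\,\phi_{g^{\mathsf T}}(a)$) sends $\Vec(\TT_\theta^{\omega})$ to $\Vec(\TT_\theta^{g\omega})$, not the other way around. Indeed, for $(E,\nabla)$ satisfying the Leibniz rule for $\delta_\omega$ one gets $\nabla(e\cdot a)=\nabla(e)\phi_{g^{\mathsf T}}(a)+e\,\delta_\omega(\phi_{g^{\mathsf T}}(a))=\nabla(e)\cdot a+e\cdot\delta_{g\omega}(a)$, whereas starting from an object of $\Vec(\TT_\theta^{g\omega})$ the term $\delta_{g\omega}(\phi_{g^{\mathsf T}}(a))$ equals $\phi_{g^{\mathsf T}}^{-1}\bigl(\delta_\omega(\phi_{g^{\mathsf T}}^2(a))\bigr)$ and does not reduce to $\phi_{g^{\mathsf T}}(\delta_\omega(a))$. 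So the functor you describe on objects of $\Vec(\TT_\theta^{g\omega})$ should use the twist by $\phi_{(g^{\mathsf T})^{-1}}$ --- which you correctly name as the inverse. Since the claim is a symmetric equivalence and you exhibit both mutually inverse twists, this is a labelling slip rather than a gap; everything else, including the preservation of the commutation relation via $ad-bc=1$, the rapid-decay argument for well-definedness of $\phi_g$ on the Fr\'echet algebra (a point the paper leaves implicit), and the rescaling argument for (b), is correct and matches the paper.
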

\begin{proof}
{\it (a)} Given a $g\in SL(2,\ZZ)$, we construct a $\ast$-automorphism
  $\sigma$ of $\A_\theta$ such that $\sigma^{-1}\delta_\omega\sigma =
  \delta_{g\omega}$. Evidently, it is enough to do this for the generators of
  $SL(2,\ZZ)$, {\it i.e.,} $g_1 = \left(\begin{smallmatrix}
                                    1 & 1 \\
                                    0 & 1 \\
                                  \end{smallmatrix}\right)$ and $g_2 =
  \left(\begin{smallmatrix} 
             0 & -1 \\
             1 & 0 \\
       \end{smallmatrix}\right)$. For $g_1$, $\delta_{g_1\omega} = (\omega_1
             +\omega_2)\delta_1 + \delta_2$. We define
             $\sigma_1:\A_\theta\map\A_\theta$ as $\sigma_1(U_1)=U_1U_2$,
             $\sigma_1(U_2) = U_2$. One may easily check that $\sigma_1(U_1)$
             and $\sigma_1(U_2)$ satisfy the commutation relation of
             $\A_\theta$ as in Eqn. \eqref{ComReln} and also that

\beqn
\sigma_1^{-1}\delta_\omega\sigma_1(U_1) = \delta_{g_1\omega}(U_1).
\eeqn 
Similarly, for $U_2$ one may check that the actions of $\delta_{\omega}$
and $\delta_{g_1\omega}$ agree. For $g_2$, $\delta_{g_2\omega} =
-\omega_2\delta_1 + \omega_1\delta_2$ and we define $\sigma_2(U_1) = U_2^{-1}$,
$\sigma_2(U_2) = U_1$. Once again one can easily check that the new generators
satisfy Eqn. \eqref{ComReln} and that the actions of $\delta_{\omega}$ and
$\delta_{g_2\omega}$ agree on $U_1$ and $U_2$. Explicitly, the functor sends
$(\A_\theta,\delta_\omega)$ to $(\A_\theta,\delta_{g_i\omega})$, $i=1,2$, and
twists the module structure by $\sigma_i$, $i=1,2$, {\it i.e.,} $e \cdot a:=
e \sigma_i(a)$, $i=1,2$ and $e \in E$. One verifies that $\nabla$ on $E$ is compatible with
$\delta_{g_i\omega}$, $i=1,2$, with respect to the twisted module
structure. Indeed, 
\beqn
\nabla(e\cdot a) = \nabla(e\sigma_i(a))=\nabla(e) \cdot a + e \cdot \delta_{g_i\omega}(a)
\eeqn where $e\in E$, $a\in \A_\theta$ and $i=1,2$.

{\it (b)} In our notation, $\delta_\tau =
  \frac{\delta_\omega}{\omega_2}$. Sending each $\nabla$ to
  $\nabla':=\frac{\nabla}{\omega_2}$ makes $\nabla'$ automatically compatible
  with $\delta_\tau$. More precisely, the functor sends
  $(\A_\theta,\delta_\omega)$ to $(\A_\theta,\delta_\tau)$ and $(E,\nabla)$ to
  $(E,\nabla')$.  
\end{proof}
We end this subsection with a technical result that we will need later on.
Let us define $\rwd(\tau) := \bar\tau\tau / |\Re(\tau)|$ as the `real width' of a transversal to $\tau \ZZ$ or, equivalently, the (interval) length of the intersection of a transversal to $\tau \ZZ$ with the reals $\RR \subset \CC$. Before we explain how this can be achieved, recall
that a {\it transversal} to $\tau \ZZ$ in $\CC$ is the image of a section of
the projection map $\CC \to \CC/\tau \ZZ$ ({\it e.g.,} the strip $0 \leq \Re
(z/\tau) < 1$).

\begin{lem}
\label{lem:tau-width}
For any $\tau \in \CC$ there is an element $g$ in $SL(2,\ZZ)$ such that $\rwd(g\tau) < 1$. 
\end{lem}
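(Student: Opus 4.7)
The plan is to use just two elements of $SL(2,\ZZ)$: the translation $T^n : \tau \mapsto \tau + n$ and the inversion $S : \tau \mapsto -1/\tau$, where I am writing the M\"obius action on $\tau$ corresponding to the linear action of $SL(2,\ZZ)$ on $\omega = (\tau, 1)$ appearing in Proposition \ref{modularity}(a). First I would compute $\rwd(S\tau)$ directly. Writing $\tau = x + iy$, a short calculation gives $|S\tau|^2 = 1/(x^2+y^2)$ while $|\Re(S\tau)| = |x|/(x^2+y^2)$, so
\[
\rwd(S\tau) = \frac{1}{|\Re(\tau)|}.
\]
The content of this identity is that the inversion $S$ trades large real parts of $\tau$ for small values of $\rwd$.

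With this in hand the argument is essentially forced. Given any $\tau \in \CC$, I would choose $n \in \ZZ$ with $|\Re(\tau) + n| > 1$ (always possible since $\Re(\tau) + \ZZ$ is unbounded; any $n$ with $|n| > 1 + |\Re(\tau)|$ works) and then set $g := ST^n \in SL(2,\ZZ)$. Applying the boxed identity with $\tau$ replaced by $T^n\tau = \tau + n$ gives
\[
\rwd(g\tau) \;=\; \rwd\bigl(S(\tau+n)\bigr) \;=\; \frac{1}{|\Re(\tau)+n|} \;<\; 1,
\]
which is the desired inequality.

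There is no substantive obstacle: the only minor bookkeeping is to ensure $\tau + n \neq 0$ (so that $g\tau$ is defined) and $\Re(\tau) + n \neq 0$ (so that $\rwd(g\tau)$ is defined). Each excludes at most a single integer from the valid choices of $n$, and is easily arranged simultaneously with the condition $|\Re(\tau) + n| > 1$, so such an $n$ always exists.
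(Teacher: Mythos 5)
Your proof is correct and follows essentially the same route as the paper's: the paper likewise applies translation by an integer $N$ followed by inversion, computes $\rwd(-(\tau+N)^{-1}) = 1/(\Re(\tau)+N)$, and picks $N$ large. Your version is slightly more careful about absolute values and the degenerate choices of $n$, but the argument is the same.
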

\begin{proof}
One simply computes that for $-(\tau +N)^{-1}$ (obtained as translation by an integer $N$ composed with inversion) we have
$$
\rwd( -(\tau+N)^{-1})  = \frac{1}{\Re(\tau) + N}.
$$
Thus, for any $\tau$ there exists an integer $N>0$ such that $\rwd( -(\tau+N)^{-1})<1$.
\end{proof}

\subsubsection{The derived category}


The derived category of holomorphic bundles is {\it defined} as the homotopy category of a
DG category (or a differential graded category), which is a category with the
$\Hom$ sets carrying a structure of a differential graded complex of
$\CC$-vector spaces (see \cite{Kel} for more details). The corresponding homotopy category is obtained by replacing the $\Hom$ complexes by their zeroth cohomologies. Henceforth, by $\TT_\theta^\tau$ we shall mean the noncommutative torus $\TT_\theta$ equipped with the derivation $\delta_\tau$. The DG category in consideration, denoted by $\mathcal{C}(\theta,\tau)$, consists of
objects of $\Vec(\TT^\tau_\theta)$, labelled by an integer
indicating its translation degree. The $\Hom$'s in $\mathcal{C}(\theta,\tau)$ are given by
a differential complex made up from the connections on the $\Hom$ sets in
$\Vec(\TT^\tau_\theta)$. Note that the $\Hom$ sets in $\Vec(\TT^\tau_\theta)$
also carry a module structure over some noncommutative torus (not necessarily $\TT_\theta$). 

Polishchuk and Schwarz constructed a functor from the
DG category $\mathcal{C}(\theta,\tau)$ to $D^b(X_\tau)$ and showed that the induced functor on the cohomology category is
fully faithful and that the image of $\Vec(\TT_\theta^\tau)$ lies in the
heart of the t-structure $(D^{\theta,\leqslant 0},D^{\theta,\geqslant})$, where 

\begin{eqnarray} \label{Poltstr}
D^{\theta,\leqslant 0}\; &=&\; \{K^\bullet\in D^b(X_\tau)\; |\; H^{>0}(K^\bullet) = 0, \text{ all ss factors of $H^0 (K^\bullet)$ have slope $>\theta$}\} \\ \nonumber
D^{\theta,\geqslant 0}\; &=&\; \{K^\bullet\in D^b(X_\tau)\; |\; H^{<-1}(K^\bullet) = 0, \text{ all ss factors of $H^{-1} (K^\bullet)$ have slope $\leqslant\theta$}\},
\end{eqnarray} with ss denoting semistable (see for the definition of slope Section \ref{StandBun} below).

Then Polishchuk showed that this functor actually induces an equivalence between $\Vec(\TT_\theta^\tau)$ and the heart $\mathcal{C}^{\theta,\tau}$ of the above t-structure \cite{Pol1}, whose derived category is again equivalent to
$D^b(X_\tau)$ \cite{Pol2}. This implies that $\Vec(\TT_\theta^\tau)$ is abelian and its derived category is
equivalent to $D^b(X_\tau)$ via the Polishchuk--Schwarz functor, denoted by $\mathcal{S_\tau}:
H^0\mathcal{C}(\theta,\tau)\map D^b(X_\tau)$.

\begin{rem} \label{summary}
The functor $\mathcal{S}_\tau$ actually induces an equivalence between
$\Vec(\TT_\theta^\tau)$ and $\mathcal{C}^{-\theta^{-1},\tau}$. Observe that $\left(\begin{smallmatrix}
                   0 & 1 \\
                   -1 & 0 \\
                 \end{smallmatrix}\right)
                  \theta = -\theta^{-1}$ says that $A_{-\theta^{-1}}$
                  is Morita equivalent to $\A_\theta$.
\end{rem}

\noindent
Summarising, one has the following equivalences of categories
\begin{equation} \label{QcohVsCoh}
\text{$\Vec(\TT_\theta^\tau)\cong \Vec(\TT_{-\theta^{-1}}^\tau) \cong
\mathcal{C}^{\theta,\tau}$ and $D^b(\mathcal{C}^{\theta,\tau})\cong
D^b(X_\tau)$}.
\end{equation}


\subsection{Standard bundles over $\TT_\theta^\tau$} \label{StandBun}

In $\Vec(\TT_\theta^\tau)$ there are certain distinguished objects called {\it standard holomorphic bundles} whose images under the Polishchuk--Schwarz functor $\mathcal{S}_\tau$ are {\it stable} objects inside $D^b(X_\tau)$, {\it i.e.,} objects of the form $\mathcal{F}[n]$ where $\mathcal{F}$ is a stable bundle on $X_\tau$ or the structure sheaf of a point \cite{PolSch}. The underlying $\A_\theta$-module of a standard holomorphic bundle is of the form $E_{m,n}$ with $m,n$ coprime as constructed in \cite{Con80}. One defines its {\it degree} and {\it rank} as
$$
\deg(E_{m,n}) := m;  \qquad
\rk(E_{m,n}) := m\theta + n
$$
whereas the {\it slope} of the bundle is defined as $\mu(E_{m,n}) := \frac{m}{m\theta +n}$.

If $m\neq 0$, given any $z\in\CC$ one endows $E_{m,n}$ with a connection of the form 

\beqn 
\nabla_z(f) =\frac{\partial f}{\partial x} + 2\pi i (\tau\mu(E_{m,n})x +z)f
\eeqn
If $m=0$, for a given $z\in\CC$ one equips $E_{0,1} =\A_\theta$ with the connection
$\nabla_z(a) = \delta_\tau(a) + 2\pi iz.a
$.
A {\it standard} holomorphic bundle is a module $E_{m,n}$ with $m\theta + n>0$ and equipped with this special connection $\nabla_z$. They are labelled by two coprime integers and a complex parameter. We set $E_{m,n}^z = (E_{m,n},\nabla_z)$; the modules $E_{0,1}^z=(\A_\theta,\nabla_z)$ will be abbreviated to $E_1^z$.

\section{Equivariant coherent sheaves and $\Vec(\TT^\tau_\theta)$}


We observe that there is an honest action of $\theta\ZZ$ on $X_\tau$ and hence on
$\Coh(X_\tau)$. Indeed, the point $\theta\,
mod\, (\ZZ+\tau\ZZ)$ on $X_\tau$ lies on the real axis of the
fundamental domain of the torus and its action is restricted to the circle
obtained by folding this axis. In fact, the action given by translations of $\theta$ on
$X_\tau$ transforms to the  action of multiplication by powers of $q=e^{2\pi i\theta}$ under the
Jacobi uniformization, {\it i.e.,} $z\longmapsto qz$ on
$\CC^\ast/\tilde{q}^\ZZ$, $\tilde{q}=e^{2\pi i\tau}$. So we are
confronted with a double quotient problem, where the actions commute. Namely,
it is the improper action of the group $q^\ZZ$ on $X_\tau$, which is itself obtained
by the free and proper action of the group
$\tilde{q}^\ZZ$ on $\CC^*$ (both actions are by
multiplication). Soibelman and Vologodsky have described the {\it quotient space} $\CC^*/q^\ZZ$ in terms of their noncommutative elliptic
curves $\BB_q$ in \cite{SoiVol}. The category $\BB_q$
is nothing but the category of $q^\ZZ$-equivariant (analytic) coherent sheaves
on $\CC^*$ (or equivalently, the category of modules over the crossed product algebra
$\sheaf(\CC^*)\rtimes_q \ZZ$, which are finitely presentable over
$\sheaf(\CC^*)$). It follows from Lemma 3.2 of \cite{SoiVol} that for any
$M\in\BB_q$ the underlying $\sheaf(\CC^*)$-module is free. However, there are
interesting actions of $\theta\ZZ$ or $q^\ZZ$ on the free modules with
respect to which they are equivariant. Let us denote by $\alpha$ the induced
action by automorphisms of $\theta \ZZ$ on $\sheaf(\CC^*)$: 
$$
\alpha (f)(z)= f(q z); \qquad (z \in \CC^*,q=e^{2\pi i\theta}).
$$
Here, we have understood the notation $\alpha:=\alpha(1)$ for the generator of
$\ZZ$, so that $\alpha(n) = \alpha^n$. What is lacking in this picture is an
infinitesimal action in terms of $\delta_\tau$ and compatible connections,
which accounts for the remaining $\tau\ZZ$ quotient operation. To this end, we define a derivation on
$\sheaf(\CC^*)$ by $\delta = \tau z \frac{d}{dz}$. It is this infinitesimal
action by $\delta$ that will turn out to be the appropriate replacement for
the infinitesimal action of the group $\tau \ZZ$.


\subsection{The category $\BB_q^\tau$}
Our goal in this section is to define a category alluded to before, which is
somehow `in between' the categories $\Vec(\TT_\theta^\tau)$ introduced by
Polishchuk and Schwarz and $\BB_q$ by Soibelman and
Vologodsky. More precisely, we would like to construct a category $\BB_q^\tau$ that is
functorially related to both of these categories. At the same time, we would
like to stay as close as possible to the setting of the Riemann--Hilbert
correspondence. The discussion above motivates us to define the following
category as a description of the quotient of $\BB_q$ by the infinitesimal
action of $\tau\ZZ$.

\begin{defn}
\label{defnB}
The category $\mathcal{B}_q^\tau$  consists of triples $(M,\sigma,\nabla)$, where
\begin{itemize}
\item $M$ is a finitely presentable $\sheaf(\CC^*)$-module, {\it i.e.,} there is an exact sequence,
\beqn
\sheaf(\CC^\ast)^m\map\sheaf(\CC^\ast)^n\map M\map 0.
\eeqn
\item $\sigma$ is a representation of $\theta\ZZ$ on $M$ covering the action
  $\alpha$ of $\theta \ZZ$ on $\sheaf(\CC^*)$, {\it i.e.,}
$$
\sigma(m\cdot f) = \sigma(m)\cdot \alpha(f); \qquad (m \in M, f \in \sheaf(\CC^*)).
$$
\item $\nabla$ is a $\theta \ZZ$-equivariant connection on $M$ 
covering the
  derivation $\delta =\tau z\frac{d}{dz}$ on $\sheaf(\CC^*)$, {\it i.e.,} it satisfies,
\begin{align*}
\nabla(m \cdot f)&=\nabla(m) \cdot f + m \cdot \d (f),\\
\nabla(\sigma(m)) &=\sigma( \nabla(m)),
\end{align*}
for all  $m \in M, f \in \sheaf(\CC^*)$.
\end{itemize}
In addition, we impose that the connection $\nabla$ is a {\rm regular singular
  connection} on $M$, that is, there exists a module basis $\{ e_1, \ldots,
e_n \}$ of $M$ for which the holomorphic functions (on $\CC^*$) $z^{-1}A_{ij}$ ($i,j =1,\ldots,n$) defined by $A_{ij} e_j = \nabla(e_i)$ have simple poles at $0$. We call $A=\left(A_{ij}\right)$ the {\rm matrix of the connection} with respect to that module basis.

\medskip

\noindent The morphisms in this category are equivariant
$\sheaf(\CC^*)$-module maps that are compatible with the connections.
We will also write $M=(M,\sigma,\nabla)$ when no confusion can arise. For two objects $M$ and $N$ we denote by $\Hom_{\sheaf(\CC^*)}^{\theta\ZZ,\d}(M,N)$ the $\CC$-linear vector space of morphisms between them.
\end{defn} 
The uniqueness of the matrix $A=A_{ij}$ (after the choice of a module basis $\{ e_i
\}$ for $M$) is due to the fact that the modules $M$ in $\BB_q^\tau$ turn out
to be free as $\sheaf(\CC^*)$-modules. This was observed in \cite[Lemma
  2]{SoiVol} and used the fact that the sheaf $M \otimes_{\CC} \sheaf_{\CC^*}$
must be torsion free due to $\theta \ZZ$-equivariance. Hence it is locally
free on $\CC^*$ and thus a trivial vector bundle. It also follows from the
fact that a coherent sheaf admitting a connection is automatically locally
free. As a consequence the $\sheaf(\CC^*)$-module
of its global sections is free. This freeness as $\sheaf(\CC^*)$-modules can be translated into freeness as $\theta\ZZ$-equivariant $\sheaf(\CC^*)$-modules as follows. Suppose that $M \simeq V \otimes_\CC \sheaf(\CC^*)$ as $\sheaf(\CC^*)$-modules with $V$ a complex vector space. Via this identification there is an induced action of $\theta \ZZ$ on $V \otimes_\CC \sheaf(\CC^*)$ making this an isomorphism of $\theta \ZZ$-equivariant $\sheaf(\CC^*)$-modules.


\begin{rem}
The $\tau$-dependence in the above construction might seem artificial, since all categories $\BB_q^\tau$ are equivalent for all $\tau$. Our notation is motivated by the relation of $\BB_q^\tau$ with $\Vec(\TT_\theta^\tau)$ which is established below. An equivalent construction could be obtained by translating the $\tau$-dependence from the category $\BB_q^\tau$ to the functor to $\psi^*: \BB_q^\tau \to \Vec(\TT_\theta^\tau)$ of Proposition \ref{ourfunctor} below.
\end{rem}

Let $\BB^\tau$ denote the category of pairs $(V,\nabla)$ with $V$ a vector bundle on $\CC^*$ and $\nabla$ a regular singular connection on $V$ associated to $\delta =\tau z \frac{d}{dz}$. 
By the above remarks on the modules $M$ in $\BB_q^\tau$, there is a functor
from $\BB_q^\tau$ to $\BB^\tau$ which forgets the action of $\theta
\ZZ$. Thanks to Deligne \cite{Del70} (see also, for instance, Theorem 1.1 and
the paragraph after Remark 1.2 of \cite{Mal87}), we know that the
category $\BB^\tau$ is equivalent to the category of finite dimensional
representations of the fundamental group $\pi_1(\CC^*,z')\simeq \ZZ$ with a base point $z'$. This result motivates the regular singularity condition we have imposed on the connections in Definition \ref{defnB}.

In Section \ref{section:RH} we will enhance this Riemann--Hilbert
  correspondence to an equivariant version and show that a similar statement
holds for $\BB^\tau_q$. Let us first proceed to examine some of the properties
of $\BB_q^\tau$ and its relation with the other two categories, {\it viz.,} $\BB_q$ and $\Vec(\TT_\theta^\tau)$.
\begin{prop} \label{BQT}
The category $\BB_q^\tau$ is an abelian category. 
\end{prop}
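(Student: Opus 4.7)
The plan is to verify the axioms of an abelian category directly. The zero module (with trivial action and connection) is an object, and the triple $(M_1 \oplus M_2, \sigma_1\oplus\sigma_2, \nabla_1\oplus\nabla_2)$ furnishes biproducts, so $\BB_q^\tau$ is additive. It remains to produce kernels and cokernels and to verify the identities $\Im f = \ker(\coker f) = \coker(\ker f)$ for every morphism $f$.

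Given $f \colon (M,\sigma_M,\nabla_M) \to (N,\sigma_N,\nabla_N)$, I would equip $K:=\ker(f)$ and $C:=\coker(f)$, computed in the abelian category of $\sheaf(\CC^*)$-modules, with the additional structure needed to land in $\BB_q^\tau$. Equivariance of $f$ forces $\sigma_M(K) \subseteq K$ and makes $\sigma_N$ descend to $C$; compatibility of $f$ with the connections makes $\nabla_M$ restrict to $K$ and $\nabla_N$ descend to $C$. The universal properties, as well as the abelian-category identities, follow formally from the fact that the forgetful functor to $\sheaf(\CC^*)$-modules is faithful and that the corresponding identities already hold downstairs.

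What requires checking is that $K$ and $C$ satisfy the hypotheses of Definition~\ref{defnB}. Finite presentability I would obtain from the coherence of $\sheaf(\CC^*)$ as the ring of global sections of the Stein manifold $\CC^*$; alternatively, as already noted after Definition~\ref{defnB}, any $\sheaf(\CC^*)$-module admitting a connection is locally free, hence free, since every holomorphic vector bundle on $\CC^*$ is trivial by the Oka principle.

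The main obstacle is the regular singularity of the induced connections on $K$ and $C$. Here I would invoke the non-equivariant Riemann--Hilbert correspondence of Deligne \cite{Del70} recalled right after Definition~\ref{defnB}: the category $\BB^\tau$ of vector bundles on $\CC^*$ with regular singular $\delta$-connection is equivalent to $\Rep(\ZZ)$, which is abelian, so kernels and cokernels computed in the ambient category of all meromorphic connections already lie in $\BB^\tau$. Composing with the evident forgetful functor $\BB_q^\tau \to \BB^\tau$, the regular singularity of $K$ and $C$ is automatic, completing the verification.
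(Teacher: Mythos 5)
Your proof is correct, but it takes a more self-contained route than the paper. The paper's own proof is a two-line reduction: it cites Proposition 3.3 of \cite{SoiVol} for the fact that $\BB_q$ is abelian, observes that the forgetful functor $\BB_q^\tau \to \BB_q$ (dropping the connection) is faithful, and then notes that a morphism $f$ intertwining the connections induces compatible connections on $\ker f$ and $\coker f$ computed in $\BB_q$. You instead rebuild the argument from the category of $\sheaf(\CC^*)$-modules, which forces you to address two points the paper outsources or leaves implicit: (i) finite presentability of kernels and cokernels, which you get from coherence of $\sheaf(\CC^*)$ or from the freeness of modules admitting connections (the paper gets this for free from \cite{SoiVol}); and (ii) preservation of the regular singularity condition under kernels and cokernels, which you handle via Deligne's Riemann--Hilbert correspondence $\BB^\tau \simeq \Rep(\ZZ)$. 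Point (ii) is a genuine improvement: the paper's proof asserts that the induced connections make $\ker f$ and $\coker f$ ``objects in $\BB_q^\tau$'' without checking regular singularity at all, so your argument closes a gap that the published proof glosses over. The price you pay is reliance on a slightly heavier input (Deligne's equivalence, or at least the closure of regular singular connections under subquotients), whereas the paper's route is shorter provided one trusts the cited result of Soibelman--Vologodsky.
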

\begin{proof}
It is proven in Proposition 3.3 of \cite{SoiVol} that the category
$\BB_q$ is abelian. One observes readily that there is a faithful functor
(forgetting the connection) from $\BB_q^\tau$ to $\BB_q$. Suppose that $f: M
\to N$ is a morphism in $\BB_q^\tau$. Since it is also a morphism in $\BB_q$,
both $\ker f$ and $\coker f$ are equivariant
$\sheaf(\CC^*)$-modules. Moreover, the map $f$ intertwines the connections on
$M$ and $N$ and hence induces compatible connections on $\ker f$ and $\coker
f$ making them objects in $\BB_q^\tau$.
\end{proof}

\noindent
We now view $\A_\theta$ as a module over $\sheaf(\CC^*)$ via the homomorphism 

\begin{align*}
\psi: \sheaf(\CC^*) &\to \A_\theta\\
\sum_{n\in \ZZ} f_n z^n & \mapsto \sum_{n \in \ZZ} f_n U_1^n.
\end{align*} This is well-defined since a sequence $f_n$ of exponential decay is certainly
a Schwartz sequence. 

\begin{rem}
The map is essentially restricting a holomorphic function on $\CC^*$ to the unit circle. In fact, it is injective since, if a holomorphic
function vanishes on the unit circle, it must vanish on the whole of
$\CC^*$. Note that $\A_\theta$ is not finitely generated over $\sheaf(\CC^*)$
and hence not an element of $\BB_q$ or $\BB_q^\tau$.
\end{rem}
 
\begin{prop}
\label{ourfunctor}
The following association defines a right exact functor, denoted $\psi_*$, from $\BB_q^\tau$ to $\Vec(\TT_\theta^\tau)$. For an object $(M,\sigma,\nabla)$ in $\BB_q^\tau$
we define an object $(\tilde M, \tilde \nabla)$ in $\Vec(\TT_\theta^\tau)$ by
\begin{align*}
\tilde M      &= M \otimes_{\sheaf(\CC^*)} \A_\theta \\ 
\tilde \nabla &= 2 \pi i  ~\nabla \otimes 1 + 1 \otimes (\tau \delta_1 + \delta_2)\\
&=2 \pi i ~\nabla \otimes 1 + 1 \otimes \delta_\tau.
\end{align*}
\end{prop}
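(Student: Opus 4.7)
The plan is to verify in turn four things: that $\tilde M$ is a finite projective right $\A_\theta$-module, that $\tilde \nabla$ is a well-defined connection lifting $\delta_\tau$, that the construction is functorial, and that the resulting functor is right exact. The data $\sigma$ plays no role in the construction itself; it only enters to ensure we are starting from a bona fide object of $\BB_q^\tau$.

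For finite projectivity, I would invoke the observation already recorded after Definition \ref{defnB}: any $M$ in $\BB_q^\tau$ is free over $\sheaf(\CC^*)$, so $M \cong V \otimes_\CC \sheaf(\CC^*)$ with $V$ a complex vector space. Finite presentability of $M$ forces $\dim_\CC V < \infty$, and tensoring over $\sheaf(\CC^*)$ collapses one factor to give
\[
\tilde M \;=\; M \otimes_{\sheaf(\CC^*)} \A_\theta \;\cong\; V \otimes_\CC \A_\theta,
\]
which is a free right $\A_\theta$-module of finite rank, hence finite projective.

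The main point—and the step I expect to be the principal (though still routine) obstacle—is the well-definedness of $\tilde \nabla$ on the balanced tensor product, which is precisely the reason for the factor $2\pi i$ in the definition. The required compatibility is the intertwining identity
\[
\delta_\tau \circ \psi \;=\; 2\pi i\, \psi \circ \delta,
\]
which one checks on the topological generators $z^n$: both sides yield $2\pi i\,\tau n\, U_1^n$, using $\delta(z^n) = \tau n z^n$ together with $\delta_\tau(U_1) = 2\pi i \tau U_1$ and $\delta_\tau(U_2)=2\pi i$. Given this, a direct computation of the two sides of $\tilde \nabla(m \cdot f \otimes a) = \tilde \nabla(m \otimes \psi(f) a)$ reveals that the discrepancy is exactly $2\pi i\, m \otimes \psi(\delta(f))\,a - m \otimes \delta_\tau(\psi(f))\,a$, which vanishes by the identity above. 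The Leibniz rule for $\tilde\nabla$ with respect to $\delta_\tau$ then drops out of the Leibniz rule for $\delta_\tau$ on $\A_\theta$.

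For functoriality, any morphism $f : M \to N$ in $\BB_q^\tau$ is $\sheaf(\CC^*)$-linear and intertwines the connections, so $f \otimes \id_{\A_\theta}$ is a right $\A_\theta$-module map $\tilde M \to \tilde N$ intertwining $\tilde \nabla_M$ with $\tilde \nabla_N$; hence it is a morphism in $\Vec(\TT_\theta^\tau)$. For right exactness, if $M' \to M \to M'' \to 0$ is exact in $\BB_q^\tau$, then by Proposition \ref{BQT} the same sequence is exact in the underlying category of $\sheaf(\CC^*)$-modules, and right exactness of the ordinary tensor product $-\otimes_{\sheaf(\CC^*)} \A_\theta$ yields exactness of $\tilde M' \to \tilde M \to \tilde M'' \to 0$; the induced connections on the cokernel are compatible with the map from $\tilde M$ by construction, so the sequence is right exact in $\Vec(\TT_\theta^\tau)$.
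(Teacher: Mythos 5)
Your proof is correct and follows essentially the same route as the paper: the crux in both is the intertwining identity $\delta_\tau\circ\psi = 2\pi i\,\psi\circ\delta$ (which the paper phrases as $\psi(2\pi i\,\delta f)=\tau\delta_1(\psi(f))$ together with $\psi(\sheaf(\CC^*))\subseteq\ker\delta_2$), and the remaining points you verify (finite projectivity via freeness of $M$, functoriality, right exactness of base change) are left implicit in the paper. The only slip is the immaterial typo $\delta_\tau(U_2)=2\pi i$, which should read $2\pi i\,U_2$.
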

\begin{proof}
Observe that $\psi( 2 \pi i \delta f) = \tau \delta_1 (\psi(f)) $ as follows from the definitions of $\delta$ and $\delta_1$. 
Moreover, the image of $\sheaf(\CC^*)$ under the map
$\psi$ lies in the kernel of the derivation $\delta_2$ on the noncommutative
torus (since $\delta_2(U_1)$ is vanishing). Hence one can add $\delta_2$ to
$\tau\delta_1$ making $\tilde \nabla$ a connection on $\tilde M$ covering
$\delta_\tau$. 
\end{proof} 

\noindent
Note that by a simple adjustment one can actually define a right exact functor from $\BB^{\omega_1}_q$ to $\Vec(\TT_\theta^\omega)$.
We also claim that, in fact,

\begin{prop}
The map $\psi$ endows $\A_\theta$ with a flat module structure over $\sheaf(\CC^*)$. 
\end{prop}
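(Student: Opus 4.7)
The plan is to reduce flatness to torsion-freeness via a classical property of $\sheaf(\CC^*)$. The ring of holomorphic functions on any open Riemann surface, and in particular on $\CC^*$, is a B\'ezout domain (every finitely generated ideal is principal) by classical results of Helmer and Wedderburn; in particular $\sheaf(\CC^*)$ is a Pr\"ufer domain. Over a Pr\"ufer domain a module is flat if and only if it is torsion-free, so it suffices to show that $\A_\theta$ has no $\sheaf(\CC^*)$-torsion via $\psi$.

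To verify torsion-freeness, I would suppose $f \in \sheaf(\CC^*)$ is nonzero and $a \in \A_\theta$ satisfies $\psi(f) \cdot a = 0$, and expand
\[
a \;=\; \sum_{n_2 \in \ZZ} a_{n_2}\, U_2^{n_2}, \qquad a_{n_2} \;:=\; \sum_{n_1 \in \ZZ} a_{n_1,n_2}\, U_1^{n_1}.
\]
Each $a_{n_2}$ lies in the commutative subalgebra $C^\infty(\SS^1) \subset \A_\theta$ generated by $U_1$ (under the Fourier identification $U_1 \leftrightarrow e^{2\pi i x}$), and the Schwartz decay of $(a_{n_1,n_2})$ ensures that $a_{n_2}$ is genuinely a smooth function on the circle. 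Since $\psi(f)$ also belongs to that commutative subalgebra, left multiplication by $\psi(f)$ preserves the $U_2$-grading, so the hypothesis reduces to $\psi(f)\, a_{n_2} = 0$ in $C^\infty(\SS^1)$ for every $n_2$. Now $\psi(f)$ is the restriction of the holomorphic function $f$ on $\CC^*$ to the unit circle; since the zeros of a nonzero holomorphic function on $\CC^*$ are isolated, $\psi(f)$ has at most finitely many zeros on the compact $\SS^1$, hence is nonzero on an open dense subset. Continuity then forces $a_{n_2} = 0$ for all $n_2$, so $a = 0$.

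The two points that require any genuine verification are (i) the B\'ezout property of $\sheaf(\CC^*)$, which is standard from the Stein/open Riemann surface literature, and (ii) the compatibility of the formal $U_2$-decomposition with left multiplication by $\psi(f)$. The latter is really the main subtlety: it is immediate only because $\psi(f)$ is a function of $U_1$ alone and therefore commutes with each $a_{n_2}$ and does not mix $U_2$-degrees; had the image of $\psi$ involved $U_2$ nontrivially, the reduction to pointwise multiplication in $C^\infty(\SS^1)$ would fail.
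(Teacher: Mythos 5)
Your proposal is correct and follows the same overall strategy as the paper: reduce flatness to torsion-freeness over a Pr\"ufer domain, and then verify torsion-freeness by decomposing an element of $\A_\theta$ along powers of $U_2$ with coefficients in $C^\infty(\SS^1)$ and using that the restriction to the circle of a nonzero holomorphic function on $\CC^*$ has only isolated zeros, so that it cannot annihilate a nonzero smooth function. The one place where you take a genuinely different route is the justification that $\sheaf(\CC^*)$ is Pr\"ufer: you invoke the classical Helmer-type theorem that the ring of holomorphic functions on an open Riemann surface is a B\'ezout domain, whereas the paper derives the Pr\"ufer property homologically, from Pirkovski{\u\i}'s computation that the global $\Ext$ dimension of $\sheaf(\CC^*)$ is $1$ together with the Fuchs--Salce criterion that a finitely generated ideal in a domain is invertible if and only if it is projective. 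Your route is more elementary and self-contained from the function-theory side (B\'ezout trivially implies Pr\"ufer), while the paper's route avoids the Weierstrass-type constructions underlying the B\'ezout property and instead leans on a single homological input; both are legitimate, and the remainder of your argument --- including the observation that $\psi(f)$, being a series in $U_1$ alone, does not mix $U_2$-degrees, which is exactly the point the paper encodes by viewing $\psi(f)$ as supported at $0\in\ZZ$ in the crossed product picture --- matches the paper's proof step for step.
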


\begin{proof}
The algebra $\sheaf(\CC^*)$ is a commutative integral domain, since holomorphic
functions cannot have disjoint support. Further, from Corollary 3.2 of
\cite{Pir} one concludes that the global $\Ext$ dimension of $\sheaf(\CC^*)$ is $1$. Hence it is a {\it Pr\"ufer
domain}, {\it i.e.,} a domain in which all finitely generated non-zero ideals
are invertible. Indeed, Theorem 6.1 of \cite{FucSal} says that a (fractional)
ideal in a domain is invertible if and only if it is projective and, since
$\sheaf(\CC^*)$ has $\Ext$ dimension $1$, given any finitely generated ideal
$I$, applying $\Hom(-,M)$ to the exact sequence $0\map I \map R\map
R/I\map 0$ for an arbitrary $M$, one finds that $\Ext^1(I,M)=0$, {\it i.e.,} $I$
is projective. It is known that a module over a Pr\"ufer domain is flat if
and only if it is torsion free (see, {\it e.g.,} Theorem 1.4 {\it ibid.}). So we
only need to check torsion freeness. We identify $\A_\theta$ as a module over
$\sheaf(\CC^*)$ with $\cS(\ZZ,C^\infty(\SS^1))$ and represent each
element as a sequence $\{g_n\}_{n\in\ZZ}$, $g_n \in C^\infty(\SS^1)$, refer to the discussion in Section \ref{HolCat}. The image of the map $\psi$ clearly lies
in $C^\infty(\SS^1)$, which is identified with the functions supported at the
identity element of $\ZZ$. In other words, for all $f \in
\sheaf(\CC^*)$, $\psi(f)$ is of the form $\{f_n\}$, where $f_n=0$ unless $n=0$. Now consider any $g=\{g_n\}\in \A_\theta$ and suppose that some 
non-zero $f\in \Ann(\{g_n\})$, {\it i.e.,} $g\ast \psi(f) =\{g_n\alpha_n(f_0)\} = 0$. This implies that $g_n(z)f_0(q^{n}z) = 0$ for all $n$, $|z|=1$. Being the
restriction of a holomorphic function on $\CC^*$, $f_0(q^{n}z)$ has a discrete
zero set on the unit circle. A smooth function on $\SS^1$ cannot have a
discrete set of points as support and hence each $g_n(z)$ must be identically zero. Thus, whenever an element in $\A_\theta$ has a
non-zero element in its annihilator ideal, the element is itself zero. Hence
$\A_\theta$ is torsion free from which the result follows. \end{proof}

\begin{cor}
The base change functor $\psi_*$ induced by the homomorphism $\psi$ is exact
and faithful.
\end{cor}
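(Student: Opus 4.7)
The plan is to deduce both properties directly from the flatness of $\A_\theta$ over $\sheaf(\CC^*)$ just established. By Proposition \ref{ourfunctor}, the functor $\psi_*$ acts on an object $(M,\sigma,\nabla)$ by base change along $\psi$ on the underlying $\sheaf(\CC^*)$-module, while the $\theta\ZZ$-equivariance and the connection transport functorially to $\Vec(\TT_\theta^\tau)$. In particular, a short exact sequence in $\BB_q^\tau$ is first of all a short exact sequence of $\sheaf(\CC^*)$-modules. Hence the exactness of $\psi_*$ reduces to the exactness of the functor $-\otimes_{\sheaf(\CC^*)} \A_\theta$, which is precisely the flatness statement of the preceding proposition.

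For faithfulness I would exploit the fact, recorded in the discussion after Definition \ref{defnB}, that every object $M \in \BB_q^\tau$ is free as an $\sheaf(\CC^*)$-module. Fix module bases for $M$ and $N$; then any morphism $f: M \to N$ in $\BB_q^\tau$ is represented by a matrix $(g_{ij})$ with $g_{ij}\in\sheaf(\CC^*)$, and $\psi_*(f) = f\otimes 1_{\A_\theta}$ is represented, in the induced bases of $\tilde M$ and $\tilde N$, by the matrix $(\psi(g_{ij}))$ with entries in $\A_\theta$. Since $\psi$ is injective (pointed out in the remark preceding Proposition \ref{ourfunctor}), the vanishing $\psi_*(f)=0$ forces every $g_{ij}$ to vanish, i.e.\ $f=0$. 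Thus $\psi_*$ is faithful on morphisms.

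There is no real obstacle here: both assertions are essentially formal consequences of results proved immediately before. The only point requiring mild care is the appeal to the freeness of objects in $\BB_q^\tau$ as $\sheaf(\CC^*)$-modules, which is what allows one to represent morphisms by matrices and thereby reduce faithfulness to the injectivity of the single map $\psi$.
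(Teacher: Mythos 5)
Your proposal is correct and follows essentially the same route as the paper: exactness is deduced from the flatness of $\A_\theta$ over $\sheaf(\CC^*)$ established in the preceding proposition, and faithfulness from the freeness of objects in $\BB_q^\tau$, which lets one represent a morphism by a matrix (equivalently, an element of $\Hom_\CC(V,V')\otimes_\CC\sheaf(\CC^*)$) on which $\psi_*$ acts entrywise by the injective map $\psi$. No gaps.
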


\begin{proof}
From the previous Proposition we conclude that the functor sends an exact
sequence of $\sheaf(\CC^*)$-modules to an exact sequence of $\A_\theta$-modules and clearly the induced morphisms respect the connections. For the faithfulness, identify each object $M\in \BB_q^\tau$ with
$V\otimes\sheaf(\CC^*)$ with $V$ a vector space; similarly write $M' = V'\otimes\sheaf(\CC^*)$. A morphism in $\BB_q^\tau$ from $M$ to $M'$ is then given by an element in $\Hom_\CC(V,V') \otimes_\CC \sheaf(\CC^*)$, whereas a morphism in $\Vec(\TT_\theta^\tau)$ between $\tilde M$ and $\tilde M'$ is given by an element in $\Hom_\CC(V,V') \otimes_\CC \A_\theta$. The functor $\psi_*$ acts on these element by $1 \otimes \psi$ and since $\psi$ is injective, it follows that $\psi_*$ is injective on morphisms. 
\end{proof}

\begin{rem} 
\label{ImageFunctor}
However, the functor is not full. It is certainly not essentially surjective
as the underlying $\A_\theta$-modules of the objects in the image are all
free, whilst $\Vec(\TT_\theta^\tau)$ has modules which are not free. It is also not injective on objects.
\end{rem}
\begin{rem}
\label{functorBtau}
Before Proposition \ref{BQT} we introduced the category $\BB^\tau\simeq \Rep(\ZZ)$ as the category of bundles on $\CC^*$ with regular singular connections compatible with $\tau z\frac{d}{dz}$. The functor $\psi_*$ can actually be regarded as a functor between $\BB^\tau$ and $\Vec(\TT_\theta^\tau)$ and we may precompose it with the forgetful functor $\BB_q^\tau\map\BB^\tau$ to get our desired $\psi_*:\BB_q^\tau\map\Vec(\TT_\theta^\tau)$. 
\end{rem}

\noindent
The main Theorem of \cite{Pol1} says that the category generated by successive
extensions of all standard holomorphic bundles over $\TT_\theta^\tau$ is already all of $\Vec(\TT_\theta^\tau)$ (refer to Subsection \ref{StandBun} for the definition of standard bundles). 
Let us denote the full subcategory of $\Vec(\TT_\theta^\tau)$ generated by successive
extensions of standard modules of the form $E^{z'}_1$,
$z'\in\CC$ by $\FrVec(\TT_\theta^\tau)$. Since the extension of two free
modules is again free, it is clear that the underlying $\A_\theta$-module of
all objects of $\FrVec(\TT_\theta^\tau)$ is free.

\begin{lem} \label{FrVect}
With respect to a suitable basis each object of $\FrVec(\TT_\theta^\tau)$ is of the form
$(\A_\theta^n,\delta_\tau + A)$, where $A$ is an $n\times n$ upper triangular
matrix in $M_n(\A_\theta)$ with diagonal entries in $\CC$. 
\end{lem}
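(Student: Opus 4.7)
The plan is induction on the number $n$ of successive extensions of standard bundles of the form $E_1^{z'} = (\A_\theta, \delta_\tau + 2\pi i z')$, which equals the rank of the underlying free $\A_\theta$-module. The base case $n=1$ is immediate: a single $E_1^{z'}$ already has $A = (2\pi i z')$, a $1\times 1$ matrix with $\CC$-entry.

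For the inductive step, I would write an object $F'$ of rank $n$ as an extension
\begin{equation*}
0 \longrightarrow F \longrightarrow F' \longrightarrow E_1^{z_n} \longrightarrow 0
\end{equation*}
in $\FrVec(\TT_\theta^\tau)$, with $F$ of rank $n-1$. Since $E_1^{z_n} = \A_\theta$ is a projective right $\A_\theta$-module, this sequence splits at the level of $\A_\theta$-modules, so $F' \cong \A_\theta^n$ as an $\A_\theta$-module (consistent with $\FrVec$). By induction pick a basis $\{e_1, \ldots, e_{n-1}\}$ of $F$ for which $\nabla_F$ has matrix $A_F$ that is upper triangular with diagonal in $\CC$. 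Then lift the canonical generator $\bar e_n \in E_1^{z_n}$ to some $e_n \in F'$; the set $\{e_1, \ldots, e_n\}$ is an $\A_\theta$-basis of $F'$.

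For the connection matrix $A_{F'}$ in this basis: since $F \hookrightarrow F'$ intertwines the connections, $\nabla_{F'}(e_j) = \nabla_F(e_j)$ for $j < n$, so the first $n-1$ columns of $A_{F'}$ agree with $A_F$ on top and vanish below. For the last column, the projection $\pi: F' \to E_1^{z_n}$ intertwines connections, so
\begin{equation*}
\pi\bigl(\nabla_{F'}(e_n) - 2\pi i z_n\, e_n\bigr) = \nabla_{E_1^{z_n}}(\bar e_n) - 2\pi i z_n\, \bar e_n = 0,
\end{equation*}
hence $\nabla_{F'}(e_n) - 2\pi i z_n\, e_n \in \ker \pi = F$, i.e.\ it expands as $\sum_{i<n} b_i e_i$ with $b_i \in \A_\theta$. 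The matrix therefore has the block form
\begin{equation*}
A_{F'} = \begin{pmatrix} A_F & b \\ 0 & 2\pi i z_n \end{pmatrix},
\end{equation*}
which is upper triangular with diagonal entries in $\CC$ (the first $n-1$ from the inductive hypothesis, the last equal to $2\pi i z_n$). This completes the induction.

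The only mild subtlety—the ``main obstacle'' such as it is—is verifying that one may always realise an object of $\FrVec(\TT_\theta^\tau)$ as an extension of a lower-rank object in $\FrVec(\TT_\theta^\tau)$ by some $E_1^{z_n}$ (as opposed to the reverse order), so that the induction proceeds cleanly; this follows from the definition of $\FrVec(\TT_\theta^\tau)$ as the full subcategory generated by successive extensions of the $E_1^{z'}$, together with the fact that extensions of free $\A_\theta$-modules are again free, so the filtration can be taken to terminate in a quotient of the form $E_1^{z_n}$.
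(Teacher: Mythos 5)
Your proof is correct and follows essentially the same route as the paper's: induction on the length of a filtration by the standard bundles $E_1^{z}$, using the module-level splitting to choose an adapted basis and reading the triangular shape of the connection matrix off the holomorphicity of the inclusion and projection. The only immaterial difference is that you peel off the top quotient $E_1^{z_n}$ where the paper peels off the sub $E_1^{z'}$, and the paper additionally records the converse (that every $(\A_\theta^n,\delta_\tau+A)$ of this form lies in $\FrVec(\TT_\theta^\tau)$), which the lemma as stated does not require.
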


\begin{proof}
It is known that given any finitely generated projective module $M$ over
$\A_\theta$ and a fixed connection $\nabla$ compatible with $\delta_\tau$, all
other compatible connections are of the form $\nabla + \phi$, $\phi\in
\End_{\A_\theta}(M)$. This follows easily from the Leibniz rule
\eqref{leibniz}. Since $M$ is of the form $\A_\theta^n$, $\phi$ is determined by a matrix $A\in M_n(\A_\theta)$. Let 

\begin{equation*} 
0\map (\A_\theta,\nabla_{z'})\overset{\iota}{\map} (\A_\theta^2,\delta_\tau + A)\overset{\pi}{\map}
(\A_\theta,\nabla_{z''})\map 0
\end{equation*} 
be a holomorphic extension in $\Vec(\TT_\theta^\tau)$. Write $A
= \left(\begin{smallmatrix}
        a & b \\
        c & d \\
        \end{smallmatrix}\right)$ with entries $a,b,c,d\in\A_\theta$ and 
        $\iota(a) =(a,0)$ and $\pi(a_1,a_2)=a_2$. One checks easily that the holomorphicity of
        $\iota$ and $\pi$ (the fact that they commute with the connections)
        forces $c = 0, a = z'$ and $d = z''$. Now by induction it follows that
        the connections obtained by successive extensions are of the desired
        form. 

Conversely, by induction suppose that every connection
        of the desired form on $\A_\theta^{n-1}$ can be obtained as an iterated extension of modules of the form $E^{z'}_1$. Let $A$ be an upper triangular
        matrix in $M_n(\A_\theta)$ whose diagonal entries are in $\CC$, {\it
        i.e.,} $A$ is of the form
$$
\begin{pmatrix}
z' & b_2 &\cdots &b_n \\
0 & &&\\
\vdots && A' &\\
0 &&&\\
\end{pmatrix},
$$
where $A' \in M_{n-1}(\A_\theta)$ is also of the prescribed type and $b_2, \ldots, b_n \in \A_\theta$. A routine calculation then shows that 
\begin{align*}
0\map (\A_\theta,\nabla_{z'}) \overset{\iota}{\map} (\A_\theta^n,\delta_\tau + A)
\overset{\pi}{\map} (\A_\theta^{n-1},\delta_\tau + A')\map 0
\end{align*}
with $\iota(a) = (a,0,\dots,0)$ and $\pi(a_1,a_2,\dots,a_n) =
(a_2,\dots,a_n)$ is a holomorphic extension in $\Vec(\TT_\theta^\tau)$. Hence
$(\A_\theta^n,\delta_\tau + A)$ belongs to $\FrVec(\TT_\theta^\tau)$. 
\end{proof}

                                                           
\begin{rem} \label{ConstEnt}
Given any matrix $A\in M_n(\CC)$, with respect to a suitable basis one can
reduce it to its Jordan canonical form (it is also upper triangular with
diagonal entries in $\CC$). Therefore, $\FrVec(\TT_\theta^\tau)$ contains all
objects of the form $(\A_\theta^n,\delta + A)$, where $A\in M_n(\CC)$ with
respect to a basis. 
\end{rem}

\noindent

As we will see later (Proposition \ref{ConnMatrix}), each object
$(M,\sigma,\nabla)$ in $\BB_q^\tau$ is isomorphic to an object, whose matrix of the connection is a constant matrix. This can be accomplished via a change of basis of $M$. Combining this with the above remark, we conclude that the image of $\psi_*$ is a subcategory $\FrVec(\TT_\theta^\tau)$.

\subsection{The effect on {K}-theory}

\noindent
We infer from Eqn. \eqref{QcohVsCoh} that the K-theory (by that we mean
the Grothendieck group, {\it i.e.,} the free abelian group generated by the
isomorphism classes of objects modulo the relations coming from all exact sequences) of $\Vec(\TT_\theta^\tau)$ is isomorphic to that of $D^b(X_\tau)$
via the Polishchuk--Schwarz equivalence $\mathcal{S}_\tau$. One knows that
$K_0(D^b(X_\tau))\cong K_0(\Coh(X_\tau)) = K_0(X_\tau) = \Pic(X_\tau)\oplus \ZZ$. The composition of the functors $\psi_*$ followed by
  $\mathcal{S}_\tau$ induces a homomorphism between $K_0(\BB_q^\tau)$ and
$K_0(\Vec(\TT_\theta^\tau))=\Pic(X_\tau)\oplus \ZZ$. One observes that
  applying $\psi_*$ one obtains only elements in $\Vec(\TT_\theta^\tau)$ whose
  underlying $\A_\theta$-modules are free. It is known that for $E\in
  \Vec(\TT^\tau_\theta)$, $\rk\mathcal{S}_\tau (E) =
  -\deg(E)$ and $\deg\mathcal{S}_\tau (E) = \rk (E)$. 
The degree of the modules, which are free, is known to be zero. Hence the composition of the two functors sends every element in
  $\BB_q^\tau$, whose image under $\psi_*$ is a standard bundle, to a torsion sheaf on $X_\tau$. One can check that
  $\sheaf(\CC^*)$ equipped with the connection $\d +  z'$, where
  $z'\in \CC$, gets mapped to the standard holomorphic bundle
  $E_1^{z'}$ as explained after Remark \ref{functorBtau}. From part (c) of Proposition 3.7 of
  \cite{PolSch} we know that $\mathcal{S}_\tau(E_{1}^{z'})$ is
  $\sheaf_{-z'}$ (up to a shift in the derived category), which
  is the structure sheaf of the point $-z'$ mod $(\ZZ + \tau\ZZ)$ in
  $X_\tau$. All modules of the form 
  $(\sheaf(\CC^*),\sigma, \d + z')$ with $z'\in\CC$ are endomorphism simple, {\it i.e.,}
  $\End(\sheaf(\CC^*),\sigma, \d + z') = \CC$. Indeed, ignoring the equivariance condition and
  the connection, $\End(\sheaf(\CC^*))=\sheaf(\CC^*)$ and the equivariance condition
  says that $\sigma(mf)=\sigma(m)f$. However, by definition $\sigma(mf) =
  \sigma(m)\alpha(f)$ whence $\alpha(f) = f$ implying $f\in\CC$. This
  module is mapped to $(\A_\theta,\delta_\tau + 2\pi iz') = E_1^{z'}$, which in
  turn is mapped to the endomorphism simple object $\sheaf_{-z'}$ in $\mathcal{C}^{\theta,\tau}$. It is known that, in fact, the Grothendieck group of any
  nonsingular curve $C$ is isomorphic to $\Pic(C)\oplus \ZZ$. In this
  identification the contribution to $\ZZ$ comes from the rank of the coherent sheaf, whereas $\Pic(C)$ can be regarded as the contribution from the
  torsion part (actually from the determinant bundle of the sheaf, which may
  be identified with a torsion sheaf via a Fourier-Mukai transform). Since we shall see later on (Corollary 21) that the classes of $(\sheaf(\CC^*),\sigma,\d + z')$ generate the K-theory of $\BB_q^\tau$, the image of the induced map on K-theory lies inside $\Pic(X_\tau)$.

\begin{prop} \label{K-Theory}
The map induced by $\mathcal{S}_\tau\circ\psi_*$ between the K-theories of
$\BB_q^\tau$ and $\Vec(\TT_\theta^\tau)$ gives a surjection from $K_0(\BB_q^\tau)$ to $\Pic(X_\tau)$.
\end{prop}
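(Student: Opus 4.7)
The plan is to split the claim into two parts: first, that the image of $\mathcal{S}_\tau\circ\psi_*$ on $K_0$ is contained in $\Pic(X_\tau)$, and second, that it surjects onto all of $\Pic(X_\tau)$.

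The containment is essentially established in the discussion preceding the statement. We have $K_0(\Vec(\TT_\theta^\tau)) \cong K_0(\Coh(X_\tau)) = \Pic(X_\tau) \oplus \ZZ$, the $\ZZ$ factor being the rank. By Proposition \ref{ourfunctor} together with the freeness of the underlying $\sheaf(\CC^*)$-modules of objects in $\BB_q^\tau$, every object in the image of $\psi_*$ is a free $\A_\theta$-module, and hence has $\deg=0$. Because $\rk\,\mathcal{S}_\tau(E)=-\deg(E)$, the composite $\mathcal{S}_\tau\circ\psi_*$ produces only rank-zero, i.e.~torsion, coherent sheaves on $X_\tau$, which in the rank/determinant splitting contribute only to the $\Pic(X_\tau)$ summand.

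For surjectivity I would exhibit enough classes in the image. Consider the family $\{(\sheaf(\CC^*),\sigma,\d+z')\}_{z'\in\CC}$ already analyzed above. By \cite[Prop.~3.7(c)]{PolSch}, each such object is carried by $\mathcal{S}_\tau\circ\psi_*$ to the skyscraper sheaf $\sheaf_{-z'}$ at the point $-z'$ modulo $\ZZ+\tau\ZZ$ in $X_\tau$. As $z'$ ranges over $\CC$, the projection $\CC\to X_\tau$ is surjective, so the image on $K_0$ contains the class $[\sheaf_p]$ for every $p\in X_\tau$.

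Finally, one checks that these classes generate $\Pic(X_\tau)$ inside $K_0(X_\tau)$. From the short exact sequence
$$
0\longrightarrow \sheaf_{X_\tau}(-p)\longrightarrow \sheaf_{X_\tau}\longrightarrow \sheaf_p\longrightarrow 0
$$
one sees that $[\sheaf_p]$ corresponds, in the rank/determinant splitting, to $\sheaf_{X_\tau}(p)\in\Pic(X_\tau)$. On an elliptic curve the collection $\{\sheaf(p):p\in X_\tau\}$ generates $\Pic(X_\tau)$: their degrees exhaust $\ZZ$, and after fixing a base point $p_0$ the Abel--Jacobi isomorphism $X_\tau \xrightarrow{\sim} \Pic^0(X_\tau)$, $p\mapsto \sheaf(p-p_0)$, shows that $K_0$-theoretic differences $[\sheaf_p]-[\sheaf_{p_0}]$ cover all of $\Pic^0(X_\tau)$. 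Equivalently, as indicated in the paper's preamble to the statement, a Fourier--Mukai transform identifies torsion sheaves with $\Pic(X_\tau)$, making the generation claim transparent. The main technical point to get right is keeping the rank/determinant decomposition of $K_0(X_\tau)$ explicit and verifying that the skyscraper classes genuinely exhaust the $\Pic(X_\tau)$ summand rather than a proper subgroup.
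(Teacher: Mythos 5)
Your proposal is correct and follows essentially the same route as the paper: the containment in $\Pic(X_\tau)$ via the degree-zero/free-module observation and the surjectivity via the skyscraper sheaves $\sheaf_{-z'}$ coming from $(\sheaf(\CC^*),\sigma,\d+z')$ are exactly the paper's two steps. The only difference is that you spell out the generation of $\Pic(X_\tau)$ by the classes $[\sheaf_p]$ (via the sequence $0\to\sheaf(-p)\to\sheaf\to\sheaf_p\to 0$ and Abel--Jacobi), where the paper simply invokes the identification of $\Pic(X_\tau)$ with the divisor class group generated by points.
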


\begin{proof}
The divisor class group of $X_\tau$ is the free abelian group generated by the
points of $X_\tau$ modulo the principal divisors, which is also isomorphic to
$\Pic(X_\tau)$. The class of each point $z'\in X_\tau$ of the divisor class
group can be identified with the class of the torsion sheaf $\sheaf_{z'}$ corresponding to the line bundle $\sheaf(z')\in
\Pic^1(X_\tau)$ and they generate $\Pic(X_\tau)$ as a group. By the above
argument $\sheaf_{z'}$ is obtained by applying the functor $\mathcal{S}_\tau\circ\psi_*$ to the
element $(\sheaf(\CC^*) ,\sigma,\delta - z')$ of $\BB_q^\tau$. Thus one obtains
a surjection onto the generating set of $\Pic(X_\tau)$ from which the
assertion follows.
\end{proof}

\begin{rem}
From Proposition 2.1 of \cite{PolSch} we know that the images of $(\sheaf(\CC^*),\sigma ,\delta +z'_1)$ and $(\sheaf(\CC^*),\sigma ,\delta + z'_2)$ under $\psi_*$ are isomorphic if and only if $z'_1\equiv z'_2 \,\mod \,
(\ZZ+\tau\ZZ)$. More generally, abbreviating the module
$(\sheaf(\CC^*),\sigma,\delta + z')$ by $M_{z'}$, one can also rephrase the linear equivalence
relation of the divisor class group to conclude that an element of the form $\sum
n_i[M_{-z'_i}]$ maps to zero at the level of K-theory whenever $\sum n_i =0$ and $\sum n_iz'_i\in
(\ZZ+\tau\ZZ)$. However, some of them actually represent the trivial class in
the K-theory of $\BB_q^\tau$, as we will see in the next section (see
Corollary \ref{ktheory}).
\end{rem}

\bigskip

Although the image of $\BB_q^\tau$ gives only the free modules in $\Vec(\TT_\theta^\tau)$, it has the interesting property of being a Tannakian category, as we will explore in the next section. Let us end this section by summarising the relations between $\BB_q^\tau$ and the categories $\BB^\tau$, $\BB_q$, $\Vec(\TT_\theta^\tau)$:
$$
\xymatrix{&\BB_q^\tau  \ar[dl] \ar[d]^{\psi_*} \ar[dr]& \\ \BB_q & \Vec(\TT_\theta^\tau) & \BB^\tau}
$$
where the two diagonal arrows are the forgetful functors discussed before. All
of these functors are faithful and exact.

\section{The Tannakian formalism and the equivariant Riemann--Hilbert correspondence}
\label{section:RH}
We will now analyse further the structure of $\BB_q^\tau$ and define a tensor product on it. Our main result is that this -- together with a fibre functor -- makes $\BB_q^\tau$ a Tannakian category. Via an equivariant version of the Riemann--Hilbert correspondence on $\CC^*$, we determine the corresponding affine group scheme. 

\subsection{Preliminaries on {T}annakian categories}
We briefly recall the notion of a Tannakian category. For more details, we refer the reader to the original works \cite{Saa, Del90, DelMil} (see also Appendix B of \cite{PutSing}). 

Let $\C$ be an $k$-linear abelian category, for a field $k$. Then $\C$ is a neutral Tannakian category over $k$ if 
\begin{enumerate}
\item The category $\C$ is a {\it tensor category}. In other words, there is a tensor product: for every pair of objects $X,Y$ there is an object $X \otimes Y$. The tensor product is commutative $X \otimes Y \simeq Y \otimes X$ and associative $X \otimes (Y \otimes Z) \simeq (X \otimes Y) \otimes Z$ and there is a unit object $1$ (such that $X \otimes 1 \simeq 1 \otimes X \simeq X$). The above isomorphisms are supposed to be functorial. 
\item $\C$ is a {\it rigid} tensor category: there exists a duality $\vee: \C \to \C^\op$, satisfying
\begin{itemize}
\item For any object $X$ in $\C$, the functor $_- \otimes X^\vee$ is left adjoint to $_- \otimes X$, and the functor $X^\vee \otimes _-$ is right adjoint to $X \otimes _-$. 
\item There is an evaluation morphism $\epsilon: X \otimes X^\vee \to 1$ and a unit morphism $\eta: 1 \to X^\vee \otimes X$ such that $(\epsilon \otimes 1) \circ (1 \otimes \eta) = 1_X$ and $(1 \otimes \epsilon) \circ (\eta \otimes 1) = 1_{X^\vee}$. 
\end{itemize}
\item An isomorphism between $\End(1)$ and $k$ is given. 
\item There is a fibre functor $\omega: \C \to \Vec_k$ to the category of $k$-vector spaces: this is a $k$-linear, faithful, exact functor that commutes with tensor products. 
\end{enumerate} 
In general, the fibre functor could be $L$-valued, where $L$ is a field extension of $k$. Henceforth, unless otherwise stated, we shall consider only {\it neutral} Tannakian categories, {\it i.e.,} those with a $k$-valued fibre functor. An important result is that every Tannakian category is equivalent to the
category of finite dimensional linear representations of an affine group
scheme $G$ over $k$. Abstractly, it is given by the automorphism group scheme of the fibre functor. However, in most examples in the literature, a concrete equivalence with $\Rep(G)$ for some $G$ is established, and we will do so as well.

\subsection{The {T}annakian category structure on $\BB_q^\tau$}
Let us generalize a little and let $(R,\delta)$ be a differential (commutative) ring that carries
an action $\alpha$ of a group $G$. Let $\Mod^{G,\delta}(R)$ denote the
category consisting of free $G$-equivariant differential $R$-modules.
Recall that a differential $R$-module is an $R$-module equipped with a map
$\nabla: M \to M$ -- a connection -- that satisfies the Leibniz rule: 
\begin{align*}
\nabla(m \cdot r)&=\nabla(m) \cdot r + m \cdot \delta (r).
\end{align*}
Moreover, $G$-equivariance means that there is an action $\sigma$ of $G$ such that 
\begin{align*}
\sigma_g(m\cdot r)&=\sigma_g(m)\cdot \alpha_g(r),\\
\nabla(\sigma_g(m)) &=\sigma_g( \nabla(m)).
\end{align*}
We will group the objects in the category $\Mod^{G,\delta}(R)$ into a triple
$(M,\sigma,\nabla)$ and denote the morphisms that respect all the structures by $\Hom^{G,\delta}_R(M,N)$.

\begin{prop}
\label{prop-rigid}
The category $\Mod^{G,\delta}(R)$ is a rigid tensor category with the tensor product given by
$$
(M,\sigma,\nabla) \otimes (N,\sigma',\nabla') 
          = ( M \otimes_{\sheaf(\CC^*)} N, ~ \sigma \otimes \sigma', ~\nabla \otimes 1 + 1 \otimes \nabla' )
$$
for any two objects $(M,\sigma,\nabla)$ and $(N,\sigma',\nabla')$ in $\Mod^{G,\delta}(R)$. 
\end{prop}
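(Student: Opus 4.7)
The plan is to verify the axioms of a rigid tensor category in order, exploiting throughout that the objects are free $R$-modules (of finite rank), so that the standard constructions of multilinear algebra apply verbatim and only compatibility with $\sigma$ and $\nabla$ needs to be checked.

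First, I would show that the tensor product is well-defined. If $M$ and $N$ are free of finite rank, so is $M\otimes_R N$. The map $\nabla\otimes 1 + 1\otimes\nabla'$ satisfies the Leibniz rule for $\delta$ by a one-line calculation using $\delta(rs)=\delta(r)s+r\delta(s)$, and the diagonal action $\sigma\otimes\sigma'$ covers $\alpha$ because $\sigma$ and $\sigma'$ both do. Compatibility of the diagonal action with the sum connection,
\[
(\sigma_g\otimes\sigma'_g)(\nabla\otimes 1 + 1\otimes\nabla')
= (\nabla\otimes 1 + 1\otimes\nabla')(\sigma_g\otimes\sigma'_g),
\]
reduces to the corresponding compatibilities on $M$ and $N$ separately. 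Bifunctoriality is inherited from bifunctoriality of $\otimes_R$.

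Second, the associativity and commutativity constraints, together with the unit $\mathbf{1}:=(R,\alpha,\delta)$, are obtained from the standard $R$-module isomorphisms $M\otimes(N\otimes P)\simeq(M\otimes N)\otimes P$, $M\otimes N\simeq N\otimes M$, $M\otimes R\simeq M$. One only needs to verify that these isomorphisms intertwine the induced $G$-actions and connections, which is immediate from their explicit definitions.

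Third, for rigidity I would define $M^\vee=\Hom_R(M,R)$, again a free $R$-module of the same finite rank, with contragredient action $\sigma^\vee_g(\phi)=\alpha_g\circ\phi\circ\sigma_{g^{-1}}$ and dual connection $\nabla^\vee$ characterized uniquely by
\[
\delta(\phi(m)) = \nabla^\vee(\phi)(m)+\phi(\nabla(m)),\qquad m\in M,\ \phi\in M^\vee.
\]
A direct check shows $\nabla^\vee$ is a $\delta$-connection and commutes with $\sigma^\vee$. The evaluation $\epsilon:M^\vee\otimes M\to R$, $\phi\otimes m\mapsto\phi(m)$, and the coevaluation $\eta:R\to M\otimes M^\vee$ sending $1$ to the canonical element $\sum_i e_i\otimes e_i^\vee$ (for any basis $\{e_i\}$ and its dual) are defined purely algebraically, and the triangle identities $(\epsilon\otimes 1)\circ(1\otimes\eta)=1_M$ and $(1\otimes\epsilon)\circ(\eta\otimes 1)=1_{M^\vee}$ are the classical snake identities for free modules.

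The main obstacle is verifying that $\epsilon$ and $\eta$ are actually morphisms in $\Mod^{G,\delta}(R)$, and this is where all the work concentrates. For $\epsilon$, compatibility with the connections is precisely the defining equation of $\nabla^\vee$, and compatibility with the $G$-actions follows by unwinding $\sigma^\vee_g(\phi)(\sigma_g(m))=\alpha_g(\phi(m))$. For $\eta$, one must check that the canonical element $\sum_i e_i\otimes e_i^\vee\in M\otimes M^\vee$ is independent of the chosen basis (so that it corresponds to $\mathrm{id}\in\End_R(M)$ under the isomorphism $M\otimes M^\vee\simeq\End_R(M)$), is fixed by $\sigma\otimes\sigma^\vee$, and is horizontal for $\nabla\otimes 1+1\otimes\nabla^\vee$. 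Both assertions amount to the statement that the identity endomorphism is $G$-invariant and flat for the induced connection on $\End_R(M)$, which is immediate. This completes the verification of all four rigidity conditions.
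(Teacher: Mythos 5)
Your proof is correct and follows essentially the same route as the paper: the same dual object $\Hom_R(M,R)$ with the contragredient action $\alpha\circ\phi\circ\sigma^{-1}$ and the dual connection $\delta\circ\phi-\phi\circ\nabla$, the same evaluation and coevaluation maps built from a basis of the free module, and the same reduction of all verifications to compatibility with $\sigma$ and $\nabla$. The only cosmetic difference is that you verify the triangle identities and the horizontality of the canonical element directly, whereas the paper also spells out the adjunction isomorphisms $\Hom(N_1, M^\vee\otimes N_2)\simeq\Hom(M\otimes N_1,N_2)$ explicitly; these are equivalent formulations of rigidity.
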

\begin{proof}
We start by checking that the tensor product is commutative. First of all, since $R$ is a commutative ring, the `tensor flip' that maps $M \otimes_\CC N \to N \otimes_\CC M$ factorizes to a bijective map of $R$-modules from $M \otimes_{R} N$ to $N \otimes_{R} M$. One also checks that it intertwines the actions $\sigma \otimes \sigma'$ and $\sigma'\otimes \sigma$ and the two connections. 

The duality is given as follows, for an object $(M=V \otimes R,\sigma,\nabla)$, $V$ a vector space, we define its dual object $(M^\vee, \sigma^\vee, \nabla^\vee)$ as follows. Define an $R$-module by,
$$
M^\vee:=\Hom_{R}(M, R ),
$$
with $r \in R$ acting on $f \in M^\vee$ by $(f\cdot r) (m) = f(m)\cdot r = f(m\cdot r)$.
It can be equipped with a dual action $\sigma^\vee$ of $G$ by setting for $f \in M^\vee$, 
$$
\sigma^\vee( f ) = \alpha \circ f \circ \sigma^{-1}.
$$
One can check that $\sigma^\vee (f)$ is again $R$-linear:
$$
\sigma^\vee (f) (m\cdot r)= \alpha \circ f \left( \sigma^{-1}(m) \cdot \alpha^{-1}( r) \right)
= \alpha \circ f \circ \sigma^{-1} (m) \cdot r = \left( \sigma^\vee(f)\cdot r \right)(m)
$$
Moreover, the action of $R$ on $M^\vee$ is equivariant with respect to $\sigma^\vee$:
$$
\sigma^\vee (f \cdot r) (m) = \alpha \circ (f \cdot r) \left(\sigma^{-1} (m)\right)= 
  \alpha\left( f (\sigma^{-1} (m))  \cdot r \right)= \alpha \circ f \circ \sigma^{-1} (m) \cdot \alpha(r).
$$
A dual connection $\nabla^\vee$ is defined by 
$$
\nabla^\vee(f) = \delta \circ f  - f \circ \nabla,
$$
which indeed satisfies the Leibniz rule
$$
\nabla^\vee (f \cdot r) (m) = 
\delta (f(m)) \cdot r + f(m) \cdot \delta(r) - f (\nabla(m)) \cdot r  =
\left(\nabla^\vee(f)\cdot r\right)(m) + (f \cdot \delta(r))(m),
$$
and is $\sigma^\vee$-invariant:
$$
\sigma^\vee \left( \nabla^\vee(f) \right) = \alpha \circ (\delta \circ f) \circ \sigma^{-1} - \alpha \circ (f \circ \nabla) \circ \sigma^{-1} 
=  \delta  \circ (\alpha \circ f \circ \sigma^{-1} )  - (\alpha \circ f  \circ \sigma^{-1}) \circ \nabla,
$$
since $\alpha$ and $\sigma$ commute with $\delta$ and $\nabla$, respectively. 

Note that since $M=V\otimes R$, we can identify,
\begin{align*}
M^\vee \simeq \Hom_R(V\otimes R, R )
\simeq  \Hom_\CC (V, \CC) \otimes R 
\simeq  V^* \otimes R,
\end{align*}
from which it follows that $M^{\vee\vee} \simeq M$. Indeed, one checks that the induced map respects the extra $(G,\delta)$-structure:
\begin{gather*}
\sigma^{\vee\vee}(m)(f) = \alpha \circ m \circ \left(\sigma^\vee\right)^{-1} (f) =\alpha \circ m \circ (\alpha^{-1} \circ f \circ \sigma)= f\left(\sigma(m)\right)\\
\nabla^{\vee\vee}(m) (f)=(\delta \circ m)(f) - m \circ \nabla^\vee(f) = \delta\left(f(m)\right) - m \circ (\delta \circ f) + f\left( \nabla(m) \right) = f(\nabla(m)).
\end{gather*}
for all $m \in M, f \in M^\vee$.
In addition, it allows one to prove that the association 
\begin{align*}
\phi \in \Hom_{R}^{G,\delta}(N_1, M^\vee \otimes_{R} N_2 ) &\mapsto \tilde \phi \in \Hom_R^{G,\delta}(M \otimes_{R} N_1, N_2) \\
\tilde \phi(m \otimes n_1) &:= \phi(n_1)(m) \in N_2.
\end{align*}
induces an isomorphism. Again, it is enough to show that this map is both $G$-equivariant and $\delta$-invariant, which is left as an exercise.

\noindent
In a similar way, one proves that 
\begin{align*}
\Hom_R^{G,\delta}(N_1 \otimes_{R} M^\vee, N_2) \simeq \Hom_{R}^{G,\delta}(N_1, N_2 \otimes_{R} M ).
\end{align*}
Finally, there is an evaluation morphism and a unit morphism given in terms of a basis $\{ e_i \}$ of $V$ and its dual $\{ \hat e_i \}$ of $V^*$ by
\begin{gather*}
\epsilon(m\otimes f) = f(m), \qquad \eta(1_R) = \hat e_i \otimes e_i,
\end{gather*}
that satisfy the required properties.
\end{proof}
Let us now return to the category $\BB_q^\tau$ of Definition \ref{defnB}.
It is not difficult to see that the above tensor product respects the regular
singularity condition in the definition of $\BB_q^\tau$. Hence this becomes a
rigid tensor category as well. We would like to show that it is in fact a
Tannakian category by constructing a fibre functor to $\Vec_\CC$. The following observations turn out to be essential in what follows.

Via a series of changes of basis, it is possible to bring the matrix $A$ in
the form of a constant matrix with all eigenvalues in the same transversal of
$\tau \ZZ$. In other words, its eigenvalues never differ by an
integer multiple of $\tau$. We follow the argument of Section 17 in \cite{Was}. Let $A(z)=A_0 + A_1 z + \cdots$ be a matrix with holomorphic entries. We first
bring the constant term $A_0$ in Jordan canonical form via a constant change
of basis matrix. Subsequently, we can bring all the eigenvalues of $A_0$ in
the same transversal of $\tau \ZZ$ by the so-called {\it shearing transformations}. Let us consider the case of a $2 \times 2$ matrix $A(z)$ and write
$$
A(z) = \begin{pmatrix} \lambda_1 & 0 \\ 0 & \lambda_2 \end{pmatrix} + \begin{pmatrix} a(z) & b(z) \\ c(z) & d(z) \end{pmatrix}, 
$$
with $a=a_1 z + a_2 z^2 + \cdots $ and similarly $b,c$ and $d$.
Let us suppose that $\lambda_1 - \lambda_2 = k \tau$ for some positive integer $k$. The change of basis is given by the matrix $D=\textup{diag}\{ 1, z \}$ and transforms $A$ to
$$
A' = D^{-1} A D + D^{-1} \delta D = \begin{pmatrix} \lambda_1 & 0 \\ c_1 & \lambda_2+\tau \end{pmatrix} + \begin{pmatrix} a(z) &z b(z)\\ c_2 z+ c_3 z^2 + \cdots  & d(z) \end{pmatrix},
$$ 
and one readily observes that the constant term $A'_0 $ of this matrix has eigenvalues that differ by $(k-1)\tau$. Proceeding in this way, one can transform $A$ to a matrix that has constant term with eigenvalues in the same transversal. The generalization to arbitrary dimensions is straightforward and can be found in Section 17.1 of {\it loc. cit.}

\begin{prop} \label{ConnMatrix}
For each object in $\BB_q^\tau$ there is an isomorphic object $(M = V \otimes \sheaf(\CC^*),\sigma,\nabla)$ in $\BB_q^\tau$ with $V$ a vector space and
\begin{enumerate}
\item $\nabla =\delta + A$ with $A$ a constant matrix with all eigenvalues in the same transversal of $\tau \ZZ$,
\item $\sigma$ is given by $\sigma(v \otimes f) = B v \otimes \alpha (f)$ for an invertible constant matrix $B$.
\end{enumerate}
\end{prop}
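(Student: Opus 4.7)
The plan is to first gauge $\nabla$ to a constant connection matrix $A$ with eigenvalues in a single transversal of $\tau\ZZ$ (for which most of the work is already done in the paragraph preceding the proposition), and then use the compatibility $\nabla\sigma = \sigma\nabla$ to force the matrix representing $\sigma$ to be constant as well. Since $M$ is free over $\sheaf(\CC^*)$, fix a basis $\{e_1,\dots,e_n\}$ identifying $M\simeq V\otimes\sheaf(\CC^*)$, and encode $\nabla(e_i) = \sum_j e_j A_{ji}$ and $\sigma(e_i) = \sum_j e_j B_{ji}$ in matrices $A\in M_n(\sheaf(\CC^*))$ and $B\in GL_n(\sheaf(\CC^*))$; the Leibniz rule and the twisted equivariance of $\sigma$ translate into the single identity $AB + \delta B = B\,\alpha(A)$, where $\alpha$ acts entry-wise by $\alpha(f)(z) = f(qz)$. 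A change of basis by $T\in GL_n(\sheaf(\CC^*))$ replaces $(A,B)$ by $(T^{-1}AT + T^{-1}\delta T,\; T^{-1}B\alpha(T))$ and, by a direct computation, preserves this identity. Step 1 then invokes the successive shearings and Jordan/Fuchsian reductions recalled just before the proposition (following \cite{Was}) to produce a $T$ after which the new connection matrix is a constant $A_0$ with eigenvalues in a single transversal $\Sigma$ of $\tau\ZZ$; this step ignores $\sigma$ and simply replaces $B$ by some other matrix in $GL_n(\sheaf(\CC^*))$, which I continue to denote $B$.

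With $A = A_0$ constant, $\alpha(A) = A$ and the compatibility reduces to $\delta B = [B,A]$. Expanding the Laurent series $B(z) = \sum_{k\in\ZZ} B_k z^k$ (convergent on all of $\CC^*$) and equating coefficients of $z^k$ gives $\tau k B_k = [B_k, A]$, equivalently $A B_k = B_k(A - \tau k I)$ for every $k\in\ZZ$. Iterating this intertwining relation yields $(A-(\mu-\tau k)I)^N B_k = B_k(A-\mu I)^N$ for all $\mu\in\CC$ and $N\ge 1$, so $B_k$ carries the generalized eigenspace $V_\mu$ of $A$ into $V_{\mu-\tau k}$. Since all eigenvalues of $A$ lie in the single transversal $\Sigma$, for $k\neq 0$ the element $\mu-\tau k$ is never an eigenvalue of $A$ and $V_{\mu-\tau k}=0$; hence $B_k=0$ for all $k\neq 0$, so $B(z)=B_0$ is constant. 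Invertibility of $B$ in $GL_n(\sheaf(\CC^*))$ then forces $B_0\in GL_n(\CC)$, which gives the desired form $\sigma(v\otimes f) = B_0 v\otimes \alpha(f)$.

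The main technical input is Step 1, which I plan to cite essentially from the preceding text; Step 2 is then a short algebraic argument whose only nontrivial input is the transversality condition. It is precisely that condition that makes Step 2 work: were two eigenvalues of $A$ allowed to differ by a nonzero integer multiple of $\tau$, the shift $\mu-\tau k$ could hit another eigenvalue for some $k\neq 0$, allowing a nonzero higher Laurent coefficient $B_k$ and destroying the constancy of $B$. Thus the transversality hypothesis imposed in Step 1 is not a mere normalization but is exactly the leverage needed to force the second conclusion of the proposition.
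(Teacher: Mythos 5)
Your Step 2 is correct and is essentially the paper's own argument: the paper likewise reduces the equivariance to $\delta B + [A,B]=0$ once $A$ is constant, expands $B$ in a Laurent series on $\CC^*$, derives $(A-\tau k\,\textup{I})B_k = B_kA$, and kills $B_k$ for $k\neq 0$ using the transversality of the eigenvalues. The only difference is that you justify the key step by an explicit iterated-intertwining/generalized-eigenspace computation instead of citing the common-eigenvalue lemma (Wasow, Theorem 4.1, or Malgrange, Lemma 4.6); that is a perfectly good, slightly more self-contained variant, and your closing observation that transversality is the real leverage, not a cosmetic normalization, is exactly right.

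The gap is in Step 1. The paragraph preceding the proposition only normalizes the \emph{constant term} $A_0$ of $A(z)=A_0+A_1z+\cdots$: the Jordan reduction and the shearing transformations put the eigenvalues of $A_0$ into a single transversal of $\tau\ZZ$, but they do not remove the higher-order terms $A_1z+A_2z^2+\cdots$. The actual content of part 1 --- that the full connection matrix can be gauged to the constant matrix $A_0$ --- is not contained in that paragraph, and you do not supply it. The paper proves it by constructing $P=\textup{I}+P_1z+\cdots$ solving $PA_0=AP-\delta P$; comparing powers of $z$ yields the Sylvester equations
$$
A_0P_k - P_k(A_0+\tau k\,\textup{I}) = -\left(A_k + A_{k-1}P_1+\cdots+A_1P_{k-1}\right),
$$
which are uniquely solvable precisely because $A_0$ and $A_0+\tau k\,\textup{I}$ share no eigenvalue (the non-resonance achieved by the shearing), and it then invokes Wasow, Theorem 5.4, to conclude that the formal series $P$ converges with the same (infinite) radius as $A$, so that $P\in M_n(\sheaf(\CC^*))$. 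Without this construction, or at least an explicit citation of the corresponding theorem on regular singular systems, your proof of part 1 rests on a claim that the preceding text does not make; and everything downstream depends on it, since your constancy argument for $B$ uses that $A$ itself (not merely $A_0$) is constant so that $\alpha(A)=A$.
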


\begin{proof}
Since $M$ is a free $\sheaf(\CC^*)$-module, there is a vector space $V$ such that $M \simeq V \otimes \sheaf(\CC^*)$. We show {\it 1.} by adopting an argument from Section 5 of \cite{Was}. By the above observations, we can write the matrix of the connection as $A=A_0 + A_1 z
+ \cdots$, with $A_0$ having eigenvalues that never differ by an element of
$\tau\ZZ$. We construct a matrix $P=\I +
P_1 z + \cdots $ ($P_k$ in $M_n(\CC)$) which solves $P A_0 = AP - \delta
P$. Comparing the powers of $z$, we find
$$
A_0 P_k - P_k (A_0 + \tau k \mathrm{I} ) = - (A_k + A_{k-1} P_1 + \cdots + A_1 P_{k-1} )
$$
which can be solved recursively by our assumption on the eigenvalues of
$A_0$. This gives a formal power series expansion and we would like to show
that the entries of $P$ are in fact holomorphic functions on $\CC^*$.

Now by Theorem 5.4 of \cite{Was}  one knows that the radius of convergence of the
entries of $P$ is the same as that of the entries of $A$, which is
infinity. Hence, $P\in M_n(\sheaf(\CC^*))$. 

Next, the action of $\sigma$ can be written as $\sigma(v \otimes f)=Bv \otimes \alpha(f)$ for some invertible matrix $B \in M_n(\sheaf(\CC^*))$ with $n$ the dimension of $V$. Expressed in terms of $A$ and $B$, the equivariance condition $\sigma \circ \nabla = \nabla \circ \sigma$ reads
\begin{equation} \label{compatibility}
\delta B + [A,B] = 0 ,
\end{equation} and as observed above, we may assume that $A$ has constant entries and with eigenvalues that are all in the same transversal. We adopt the argument from the proof of Theorem 4.4 in \cite{Mal87} to show that $B$ is in fact constant. Writing $B$ as a Laurent series $B=\sum_{k \in \ZZ} B_k z^k$ we obtain the following relations
$$
(A -\tau k \textup{I}_n)B_k = B_k A, \qquad k \in \ZZ.
$$ 
This implies \cite[Theorem 4.1]{Was} (see also Lemma 4.6 in \cite{Mal87}) that $(A- \tau k \textup{I}_n )$ and $A$ have at least one common eigenvalue. But since the eigenvalues of $A$ are all in a transversal of $\tau \ZZ$ in $\CC$, this is impossible unless $k=0$, and we conclude that $B_k =0$ for all $k \neq 0$. 
\end{proof}

Our next task is to show that $\BB_q^\tau$ is in fact a Tannakian category and compute the corresponding affine group scheme. For this, we use an equivariant version of the Riemann--Hilbert correspondence. 
\begin{thm}
\label{thm:RH}
\begin{enumerate} 
\item \label{part1} The category $\BB_q^\tau$ is a Tannakian category with the fibre functor given by 
\begin{align*}
\omega: \BB_q^\tau & \map \Vec_\CC\\
 (M,\sigma,\nabla) &\longmapsto (\ker \nabla)_z,
\end{align*}
mapping to the germs at a fixed point $z\in\CC^*$ of local solutions to the
differential equation $\delta f + A f = 0$, where $\nabla = \delta + A$ with
respect to a suitable basis of $M$. 
\item \label{part2} The category $\BB_q^\tau$ is equivalent to the category $\Rep(\ZZ + \theta \ZZ)$ of finite dimensional representations of $\ZZ + \theta \ZZ \simeq \ZZ^2$.
\end{enumerate}
\end{thm}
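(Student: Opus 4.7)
The plan is to establish both parts simultaneously by putting each object of $\BB_q^\tau$ into the normal form of Proposition \ref{ConnMatrix} and reading off an explicit representation of $\ZZ^2$ from the fibre functor $\omega$, using the classical regular-singular Riemann--Hilbert correspondence $\BB^\tau \simeq \Rep(\ZZ)$ cited earlier as a black box for the non-equivariant statement.

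For Part \ref{part1}, Proposition \ref{prop-rigid} already makes $\BB_q^\tau$ a rigid tensor category once one checks that the tensor product of two regular singular connections is again regular singular (compute the connection matrix in a concatenated basis; simple poles add). The unit object $(\sheaf(\CC^*), \id, \delta)$ has $\End(\mathbf{1}) = \CC$, since the only $\theta \ZZ$-invariant $\delta$-flat holomorphic functions on $\CC^*$ are constants. For the fibre functor $\omega$, the connection is nonsingular at the chosen base point $z \in \CC^*$, so $(\ker \nabla)_z$ is a $\CC$-vector space of dimension $\rk M$; $\CC$-linearity, faithfulness, exactness and monoidality of $\omega$ then follow from standard properties of the germs-of-flat-sections functor.

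For Part \ref{part2}, I would normalise $(M,\sigma,\nabla)$ via Proposition \ref{ConnMatrix} to the shape $M = V \otimes \sheaf(\CC^*)$, $\nabla = \delta + A$, $\sigma = B \otimes \alpha$ with $A, B \in \End(V)$ constant, $[A,B]=0$, and the spectrum of $A$ contained in a single transversal of $\tau \ZZ$. Pulling back along the universal cover $w \mapsto e^w = z$, the connection reads $\tau \partial_w + A$ with fundamental matrix $e^{-Aw/\tau}$, identifying $\omega(M)$ with $V$. The loop $w \mapsto w + 2\pi i$ induces the monodromy $T_1 := e^{-2\pi i A/\tau}$, while the lifted equivariance $\psi(w) \mapsto B \psi(w + 2\pi i \theta)$ induces $T_\theta := B e^{-2\pi i \theta A/\tau}$; the two commute because $[A,B]=0$, producing a representation of $\ZZ \cdot 1 + \ZZ \cdot \theta \simeq \ZZ^2$ on $\omega(M)$. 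This defines a functor $F : \BB_q^\tau \to \Rep(\ZZ + \theta \ZZ)$ whose quasi-inverse $G$ takes commuting $(S_1, S_\theta) \in \Aut(V)^2$, chooses the unique $A$ with spectrum in the fixed transversal and $e^{-2\pi i A/\tau} = S_1$ (then $S_\theta$ automatically commutes with $A$, since in this normal form $A$ is a function of $S_1$), sets $B := S_\theta e^{2\pi i \theta A/\tau}$, and forms $(V \otimes \sheaf(\CC^*), B \otimes \alpha, \delta + A)$. Tensor compatibility reduces to $A_{M \otimes N} = A_M \otimes 1 + 1 \otimes A_N$ and $B_{M \otimes N} = B_M \otimes B_N$, which feed into the exponentials to give $T_i \otimes T_i'$ on the nose.

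The main obstacle will be the fully faithfulness of $F$. Faithfulness is easy since objects in $\BB_q^\tau$ are free $\sheaf(\CC^*)$-modules, so morphisms are determined by their restriction to any stalk. For fullness one must show that any $\CC$-linear $S : V \to V'$ with $S T_1 = T_1' S$ and $S T_\theta = T_\theta' S$ lifts to an $\sheaf(\CC^*)$-linear map of triples. The $T_1$-intertwining part is exactly what the classical Riemann--Hilbert theorem on $\CC^*$ provides (producing a connection-compatible map between the underlying objects of $\BB^\tau$); the remaining $T_\theta$-condition reduces, once everything is in the normal form, to the single finite-dimensional identity $SB = B' S$, making $S \otimes 1$ the desired lift. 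The delicate bookkeeping is that the normal form of Proposition \ref{ConnMatrix} determines $A$ only up to conjugation inside its transversal: one must verify that the equivalence classes of normalised representatives match coherently and that morphisms transport across different normalisations, so that $F$ and $G$ are indeed well defined up to natural isomorphism and inverse to each other.
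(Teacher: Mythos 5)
Your proposal is correct in outline and rests on the same core mechanism as the paper: normalise each object via Proposition \ref{ConnMatrix}, use the unique matrix logarithm with spectrum in a fixed transversal of $\tau\ZZ$, and reduce everything to constant matrices intertwining $A,B$ and $A',B'$. The one genuinely different choice is your second generator of $\ZZ^2$: you take the honest monodromy $T_\theta = Be^{-2\pi i\theta A/\tau}$ obtained by lifting to the universal cover, whereas the paper's functor $\F_\trans$ simply takes $B=\rho_2(1)$ itself. Your choice is intrinsic (it is $\sigma$ followed by analytic continuation from $qz$ back to $z$), so it is independent of the normal form and tensor-compatible in both generators on the nose; the paper's choice makes the second generator trivially monoidal but forces the repair of the first generator by the shearing map $c_{V,V'}$ of Eqn.~\eqref{Atensor}. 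Both parametrisations yield equivalent functors. However, two of the items you defer with ``one must verify'' are precisely where the paper does real work, and they are not optional. First, the statement that any $\theta\ZZ$-equivariant, connection-compatible map between normalised objects is a \emph{constant} matrix: the paper proves this by expanding $\phi=\sum_k\phi_k z^k$, deriving $(A'-\tau k\,\mathrm{I})\phi_k=\phi_k A$, and invoking the common-eigenvalue criterion (Theorem 4.1 of \cite{Was}; Lemma 4.6 of \cite{Mal87}) to force $\phi_k=0$ for $k\neq 0$. This single lemma underlies both the faithfulness of $\omega$ in part 1 -- which you dispatch a little too quickly as ``standard properties of the germs-of-flat-sections functor'' -- and the fullness of your $F$ in part 2, since your appeal to the classical Riemann--Hilbert theorem only produces \emph{some} connection-compatible map, and you still need this lemma to identify it with $S\otimes 1$ before the $T_\theta$-condition can ``reduce to $SB=B'S$''. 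Second, well-definedness across normal forms: a shearing transformation shifts an eigenvalue of $A$ by $\tau k$ while multiplying the corresponding block of $B$ by $q^{\pm k}$, and these changes cancel exactly in the product $Be^{-2\pi i\theta A/\tau}$; stating this cancellation explicitly is what makes your coherence claim true, and is the analogue of the paper's remark that the functors $\F_\trans$ for different transversals are related by a shearing natural transformation.
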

\begin{proof}
By the existence and uniqueness of local
solutions of linear differential equations, there are $n$ local solutions to
the system of differential equations $\delta U = -A U $ once we have fixed the
initial conditions, so that $(\ker \nabla)_z$ is an $n$-dimensional complex vector space. That the functor $\omega$ is faithful can be seen as follows. Suppose $\phi$ is a morphism between two objects $(M,\sigma,\nabla)$ and $(M',\sigma',\nabla')$ and suppose that these objects are of the form as in Proposition \ref{ConnMatrix}, with the eigenvalues of $A,A'$ in the same transversal. We claim that $\phi$ is given by a constant matrix so that $\omega(\phi)$ mapping $(\ker \nabla)_z$ to $(\ker \nabla')_z$ coincides with $\phi$. 
The argument is very similar to that used in the second part of the proof of Proposition \ref{ConnMatrix} since compatibility of $\phi$ with the connections implies
\begin{align*}
(A' -\tau k \textup{I}_n)\phi_k = \phi_k A,\qquad  k \in \ZZ,
\end{align*}
where we have written $\phi = \sum_{k \in \ZZ} \phi_k z^k$.
An application of Theorem 4.1 in \cite{Was} then implies that $A$ and $A'-\tau k \textup{I}_n $ have a common eigenvalue. This is impossible unless $k=0$ since by assumption $A$ and $A'$ have eigenvalues in the same transversal. We conclude that $\phi_k =0$ for all $k\neq 0$ so $\phi$ is given by a constant matrix, intertwining $A$ and $A'$.

The general case follows by observing that Proposition \ref{ConnMatrix} implies that a morphism between two objects in $\BB_q^\tau$ can always be written as $D_2 \circ \phi \circ D_1^{-1}$ with $\phi$ constant as above and with $D_i$ certain (invertible) change of basis matrices. 

\medskip

For {\it 2.}, fix a transversal $\trans$ to $\tau \ZZ$ in $\CC$. We construct a tensor functor $\F_\trans: \Rep(\ZZ^2) \to \BB_q^\tau$ that is full, faithful and essentially surjective. 
Let $\rho_1, \rho_2$ be two mutually commuting representations of
$\ZZ$ on a vector space $V$. Then we define $A \in \End(V)$ via $\rho_1(1) = e^{2
  \pi i  A/\tau}$ and $B$ as $\rho_2(1)$. By Lemma 4.5 in \cite{Mal87}, there
exists a unique matrix $A$ such that $\rho_1(1) = e^{2\pi i A/\tau}$ with its eigenvalues in the transversal $\trans$ and a unique matrix $B'$ such that $B = e^{2\pi i B'}$. We set $\F_\trans(V)=(M,\sigma,\nabla)$ in $\BB_q^\tau$ by setting $M=V \otimes \sheaf(\CC^*)$, $\sigma(v
\otimes f) = Bv \otimes \alpha(f)$ and finally $\nabla(v \otimes f) =  A v
\otimes f + v \otimes \delta f$; for a morphism $\phi \in \Hom(V,V')$ we simply set $\F_\trans(\phi) = \phi \otimes 1$.
Once again by Lemma 4.5 {\it ibid.} the
matrices $A$ and $B'$ commute, whence $A$ and $B=e^{2\pi i B'}$ commute. Thus
the compatibility condition between $\sigma$ and $\nabla$ given by Eqn.
\eqref{compatibility} is satisfied. Moreover, $\F_\trans(\phi)$ is compatible with $\sigma$ and $\nabla$ and thus a morphism in $\BB_q^\tau$. 

We infer from Proposition \ref{ConnMatrix} that the functor $\F_\trans$ is essentially surjective, since any object in $\BB_q^\tau$ is isomorphic to an object obtained from an element in $\Rep(\ZZ^2)$ by the above procedure.

Fullness and faithfulness of this functor can be seen as follows. Let $V,V'$ be two
vector spaces with the action of $\ZZ^2$ given by $e^{2\pi i A/\tau}, B$ and
$e^{2\pi i A'/\tau}$, $B'$ respectively. We can choose the square matrices $A$ and $A'$ such that their eigenvalues lie in the transversal $\trans$. It then follows by the same reasoning as before that an element $\rho \in \Hom_{\sheaf(\CC^*)}^{\theta \ZZ,\delta}(M,M')$ is given by a constant matrix that intertwines $A,B$ and $A',B'$, respectively. Hence, it is given by an element in $\Hom(V,V')$ that commutes with $\rho_1$ and $\rho_2$ ({i.e.} a morphism in $\Rep(\ZZ^2)$).

Finally, we show that $\F_\trans$ is a tensor functor. 
Suppose that $(V,\rho_1,\rho_2)$ and $(V',\rho_1',\rho_2')$ are two objects in
$\Rep(\ZZ^2)$; we need to show that there are natural isomorphisms $c_{V,V'}: F(V) \otimes F(V') \to F(V \otimes V')$. As before, we define the connection matrix $A$ by setting $e^{2\pi i A/\tau} = \rho_1(1)$ and $B= \rho_2(1)$; in the same manner we define $A'$ and $B'$ from $\rho_1'$ and $\rho_2'$. We then have
\begin{align*}
F(V,\rho_1,\rho_2) \otimes F(V',\rho_1',\rho_2') = \left( (V\otimes\sheaf(\CC^*) )\otimes_{\sheaf(\CC^*)} (V'\otimes\sheaf(\CC^*)), \sigma \otimes \sigma', \delta + A \otimes 1 + 1 \otimes A' \right).
\end{align*}
One observes that the eigenvalues of the matrix $A \otimes 1 + 1 \otimes A'$ lie possibly outside the transversal $\trans$. However, there is a unique matrix $\tilde A$ with all its eigenvalues in $\trans$ such that
\begin{align}
\label{Atensor}
e^{2 \pi i \tilde A/\tau } := e^{2 \pi i (A \otimes 1 + 1 \otimes A')/\tau} = e^{2 \pi i A/\tau } \otimes e^{2 \pi i A'/\tau } \equiv \rho_1(1) \otimes \rho_1'(1).
\end{align}
The procedure of associating to $A \otimes 1 + 1 \otimes A'$ the matrix
$\tilde A$ defines the required map $c_{V,V'}$ since $\tilde A$ is the
connection matrix that one would have obtained (via $\F_\trans$) from $\rho_1\otimes
\rho_1'$. In fact, it follows that if $A \otimes 1 + 1 \otimes A'$ commutes
with $B \otimes B' \equiv \rho_2(1) \otimes \rho_2'(1)$ then so does $\tilde
A$. This map is natural in $V$ and $V'$ and the usual diagrams expressing
associativity and commutativity ({\it cf.} for instance \cite[Definition 1.8]{DelMil}) are satisfied. Moreover, it is bijective since an inverse can be constructed from Eqn. \eqref{Atensor} by using  the identification $\End_\CC(V \otimes V') = \End_\CC(V) \otimes \End_\CC(V')$ to obtain $A$ and $A'$ back from $\tilde A$.
\end{proof}
Note that the choice of the transversal $\trans$ is irrelevant since two
functors $\F_\trans$ and $\F_{\trans'}$ associated to two different
transversals $\trans$ and $\trans'$ to $\tau \ZZ$ are related via a natural
transformation that is given explicitly by a shearing transformation as
discussed before Proposition \ref{ConnMatrix}. 

We observe that it is also possible to prove the above equivalence directly by means of
the fibre functor $\omega$. For this we consider the full subcategory of
$\BB_q^\tau$ such that the connection matrices have all eigenvalues in the
same transversal $\trans$. It follows from Proposition \ref{ConnMatrix} that
this category is equivalent to $\BB_q^\tau$. By constructing the maps $c_{M,M'}$ very similar to those appearing in the above proof, one can show that this is an equivalence of rigid tensor categories.
Moreover, the restriction of the fibre functor gives it the structure of a
Tannakian category. The fibre functor induces an equivalence with
$\Rep(\ZZ^2)$ by defining the action of $\ZZ^2$ on $(\ker \nabla)_z$ to be
given by the matrices $e^{2 \pi i A /\tau}$ and $B$. Clearly, the functor $\F_\trans$ from the proof of Theorem \ref{thm:RH} is the inverse to this fibre functor.

\begin{rem}
For any group $H$ the category of its finite dimensional representations over
$\CC$ forms a neutral Tannakian category, which should be equivalent to the
category of representations of some affine group scheme, say $\hat{H}$. The group
scheme $\hat{H}$ is called the {\it algebraic hull} of $H$. Strictly speaking,
the affine group scheme underlying $\BB_q^\tau$ is the algebraic hull of
$\ZZ^2$. We refer the readers to Proposition 10.1 of \cite{Put} for an explicit computation of the
algebraic hull of $\ZZ$.

\end{rem}

\noindent
As a consequence we are able to conclude that the K-theory of $\BB_q^\tau$
 is the same as that of $\Rep(\ZZ^2)$. An object
of $\Rep(\ZZ^2)$ is a vector space $V$ equipped with two commuting linear
invertible endomorphisms. Using the fact that the two endomorphisms commute,
{\it i.e.,} respect each others eigenspaces,  one can always find a common
eigenvector $w$. This gives an exact sequence $0\map \langle w\rangle \map V
\map V/\langle w\rangle\map 0$ in $\Rep(\ZZ^2)$. Therefore, the K-theory of $\Rep(\ZZ^2)$ is the free abelian group generated by the simple objects, which are one dimensional
representations with two actions $a$ and $b$, with $a,b\in\CC^*$ (the actions
are given by multiplication by $a$ and $b$ respectively). The fibre functor sends the isomorphism class of $\left(\sheaf(\CC^*),b\alpha,\delta + z'\right)$ with $z' \in \CC$ to the simple object $(\CC,b,e^{2\pi i z'/\tau})$ in $\Rep(\ZZ^2)$. Note that
$\left(\sheaf(\CC^*),b\alpha,\delta + z'\right)$ and
$\left(\sheaf(\CC^*),b\alpha,\delta + (z'+n\tau)\right)$ are isomorphic via
the shearing transformation by $z^{n}$. Indeed,

\beqn
(\delta + z')z^nf = n\tau z^nf + z^n\delta f + z'z^nf = z^n\left(\delta
+(z'+n\tau)\right) f
\eeqn and their images also get identified via the
exponentiation. Summarising, we obtain

\begin{cor} \label{ktheory}
The K-theory of $\BB_q^\tau$ is the free abelian group generated by the isomorphism classes of the objects
$\left(\sheaf(\CC^*),b,\delta + z'\right)$ with $b\in \CC^*$ and $z'\in
\CC/\tau\ZZ$. Under this identification, one finds that the map on K-theory induced by the functor $\mathcal{S}_\tau\circ \psi_*$ sends
the class of $(\sheaf(\CC^*),b,\delta + z')$ to the divisor class of the point $-z'\in
X_\tau$ and their linear combinations accordingly. 
\end{cor}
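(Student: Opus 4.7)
My plan is to leverage the equivalence $\BB_q^\tau \simeq \Rep(\ZZ^2)$ established in Theorem \ref{thm:RH} and reduce the K-theory computation to that of $\Rep(\ZZ^2)$. First, since equivalences of abelian categories induce isomorphisms of Grothendieck groups, it suffices to compute $K_0(\Rep(\ZZ^2))$ and then transport the generators back through the functor $\F_\trans$. An object of $\Rep(\ZZ^2)$ is a finite-dimensional vector space $V$ together with two commuting invertible endomorphisms. Since these endomorphisms commute, they share a common eigenvector $w\in V$, giving a short exact sequence
\begin{equation*}
0\map \langle w\rangle \map V \map V/\langle w\rangle \map 0
\end{equation*}
in $\Rep(\ZZ^2)$. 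Induction on $\dim V$ then shows that $[V]$ can be expressed in $K_0$ as a sum of classes of one-dimensional representations, and since any nontrivial extension of simple objects is still of strictly larger dimension, the simple objects form a $\ZZ$-basis of $K_0(\Rep(\ZZ^2))$. The simple objects are parametrized by pairs $(a,b) \in \CC^* \times \CC^*$, where $a$ and $b$ specify the two commuting actions.

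Next, I would translate this basis back through the equivalence. Recalling the construction of $\F_\trans$ in the proof of Theorem \ref{thm:RH}, the simple representation $(\CC,a,b)$ with $a = e^{2\pi i z'/\tau}$ (for a unique $z'$ in a chosen transversal $\trans$) corresponds to $(\sheaf(\CC^*), b\alpha, \delta + z')$. To remove the dependence on the transversal and obtain the parametrization by $z' \in \CC/\tau\ZZ$ as stated, I would verify directly that the shearing transformation by $z^n$ provides an isomorphism between $(\sheaf(\CC^*), b\alpha, \delta + z')$ and $(\sheaf(\CC^*), b\alpha, \delta + (z'+n\tau))$, as already observed just before the statement of the corollary; this makes the bijection between isomorphism classes and $\CC^* \times \CC/\tau\ZZ$ canonical.

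For the second assertion, I would chase the image of a generator $[(\sheaf(\CC^*),b,\delta + z')]$ through the composition $\mathcal{S}_\tau \circ \psi_*$. By the definition of $\psi_*$ (Proposition \ref{ourfunctor}), this object is sent to $(\A_\theta, \delta_\tau + 2\pi i z') = E_1^{z'}$; note that the action $\sigma$ by $b$ is absorbed into the tensor product with $\A_\theta$ and disappears at the level of the underlying holomorphic bundle. Then, by part (c) of Proposition 3.7 of \cite{PolSch} recalled earlier in the paper, $\mathcal{S}_\tau(E_1^{z'}) = \sheaf_{-z'}$, the structure sheaf of the point $-z' \bmod (\ZZ + \tau\ZZ)$ in $X_\tau$, which represents the divisor class of $-z'$. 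Extending $\ZZ$-linearly gives the claimed formula on K-theory.

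The main obstacle, and the only genuinely nontrivial point, is the passage from the parametrization of simple objects in $\Rep(\ZZ^2)$ by $\CC^*\times\CC^*$ to the parametrization of generators in $\BB_q^\tau$ by $\CC^* \times \CC/\tau\ZZ$: the exponential map $z' \mapsto e^{2\pi i z'/\tau}$ is not injective on $\CC$, and one must check that the identification induced by the shearing isomorphism matches exactly the kernel $\tau\ZZ$ of this exponential. The routine computation $(\delta + z')z^n f = z^n(\delta + (z'+n\tau))f$ together with the fact that $e^{2\pi i z'/\tau}$ depends only on $z' \bmod \tau\ZZ$ takes care of this, and the rest of the argument is formal.
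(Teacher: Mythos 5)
Your proposal is correct and follows essentially the same route as the paper: reduce to $K_0(\Rep(\ZZ^2))$ via Theorem \ref{thm:RH}, exhibit the simple objects as the free generators using the common-eigenvector filtration, match them with the objects $(\sheaf(\CC^*),b,\delta+z')$ via $\F_\trans$ and the shearing isomorphism accounting for the $\tau\ZZ$-periodicity of the exponential, and trace the generators through $\mathcal{S}_\tau\circ\psi_*$ using Proposition 3.7(c) of Polishchuk--Schwarz. The only difference is that you spell out the induction and the kernel-matching point slightly more explicitly than the paper does.
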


\section{\'Etale fundamental group of $\TT_\theta^\tau$} 
In the context of noncommutative algebraic geometry typically one treats a small triangulated category with some geometric conditions (like finite total homological dimension) as a noncommutative space (see {\it e.g.,} \cite{KonNotes}). One views a classical smooth and proper algebraic variety in this setting via its bounded derived category of coherent sheaves. Roughly, Nori's notion of an \'etale fundamental group of a smooth and proper variety relies on realizing a Tannakian category structure on the subcategory of {\it Nori finite} bundles of the category of degree $0$ semistable bundles \cite{Nor1,Nor2}. We intend to formulate a similar subcategory in $\Vec(\TT_\theta^\tau)$ as a proposal for an \'etale fundamental group of the noncommutative torus. Let us briefly recall the (classical) algebraic setup.

A bundle $\Sheaf{F}$ on a smooth and proper algebraic variety $X$ is called {\it finite} if its class $[\Sheaf{F}]\in \K(X)$, where $\K(X)$ is a the Grothendieck group of the additive monoid of vector bundles on $X$, is integral over $\ZZ$. The {\it Nori finite bundles} are subquotients of finite directs sums of finite bundles. However, in characteristic $0$ Nori finite bundles are the same as finite bundles. 

Bridgeland's notion of a {\it stability condition} on an abstract triangulated category provides an axiomatic characterization of the subcategory of semistable bundles, which are sliced by their {\it phases}, a notion closely related to the degree of a bundle (in fact analogous to its slope) \cite{Bri1}. A stability condition on an abstract triangulated category $\mathcal{C}$ is a family of full additive subcategories $\mathcal{P}(\theta)$ parametrized by $\theta\in\RR$ and group homomorphism from 
$\mathcal Z:\K_0(\mathcal{C})\map \CC$ satisfying a few conditions that can be found in {\it ibid.} It turns out that specifying a stability condition is equivalent to specifying a bounded t-structure and a group homomorphism from the Grothendieck group of its heart to the complex numbers, sending the effective cone of the Grothendieck group to the upper half plane and satisfying the so-called {\it Harder--Narasimhan property} (Proposition 5.3 {\it ibid.}). 


The objects of $\mathcal{P}(\theta)$ are called {\it semistable objects} of phase $\theta$. This definition was motivated by an attempt to understand the ``stringy K\"ahler moduli" space of a Calabi--Yau manifold and so it is expected that a triangulated category equipped with a stability condition should behave like a noncommutative K\"ahler manifold. As a by-product we get hold of the subcategory of semistable objects $\mathcal{P}_\mathcal{C}:=\cup_{\theta\in\RR} \mathcal{P}(\theta)$, which comes with its natural slicing given by $\theta\in\RR$. It follows that each subcategory $\mathcal{P}(\theta)$ is abelian (Lemma 5.2 {\it ibid.}).

\begin{ex} \label{StabCond}
Let us look at the bounded derived category of coherent sheaves on a smooth projective curve $X$ denoted by $D^b(X)$, which Bridgeland used as a instructive example. We choose the standard t-structure on $D^b(X)$ and for any $\Sheaf{F}\in D^b(X)$ denote by $[\Sheaf{F}]$ its class in $\K_0(D^b(X))$. For any $\Sheaf{F}\neq 0$ define $\mathcal{Z}([\Sheaf{F}]) = -\deg(\Sheaf{F}) + i\rk(\Sheaf{F})$. This defines a stability condition and writing $\mathcal{Z}([\Sheaf{F}])=re^{\pi i\theta}$, we find that $\theta = \frac{1}{\pi}\arctan(\rk(\Sheaf{F})/(-\deg(\Sheaf{F})))=-\frac{1}{\pi}\arctan(1/(\mu(\Sheaf{F}))$. The subcategory $\mathcal{P}(\theta)$ corresponds to the abelian subcategory consisting of semistable bundles of slope $-1/\tan(\pi\theta)$. It turns out that the set of stability conditions on $D^b(X)$ (with some additional finiteness assumptions which we ignore here) admits a natural action of $\widetilde{GL}_2^+(\RR)$, which is free and transitive. For the case of higher genus, we refer to \cite{Mac}.
\end{ex}

\subsection{Semistable holomorphic bundles of degree $0$} \label{semistable}

There is an intrinsic definition of semistable bundles over $\TT_\theta^\tau$ (exactly mimicking slope stability) with a notion of Harder--Narasimhan filtration \cite{Pol6}. As observed before, a stability condition on $D^b(X_\tau)$ can be constructed by specifying a bounded t-structure and group homomorphism from the K-theory of its heart to the complex numbers mapping the effective cone to the upper half plane and having Harder--Narasimhan property. Let us consider the t-structure of Equation \eqref{Poltstr}. It follows from the main result of \cite{Pol1} that the K-theory of the heart of this t-structure, which is equivalent to $\Vec(\TT_\theta^\tau)$, is generated by the classes of the standard bundles $[E_{m,n}^z]$ such that its rank $\rk(E_{m,n}^z)=m\theta + n$ is positive. In fact, similar to Example \ref{StabCond} we obtain a stability condition on $D^b(\Vec(\TT_\theta^\tau))\cong$ $D^b(X_\tau)$ given by $\mathcal{Z} ([E_{m,n}^z]) = -\deg(E_{m,n}^z) + i\rk(E_{m,n}^z) = -m+i(m\theta +n)$. The Harder--Narasimhan property follows from the existence of such a filtration for holomorphic bundles (see Proposition 2.8.3 of \cite{Pol6}). This specifies the subcategory of semistable bundles of degree $0$, {\it i.e.,} $m=0$, which in terms of phase (with respect to this stability condition) is given by $\mathcal{P}(1/2)$. Let us denote this category by $\ssz(0,\TT_\theta^\tau)$; it is abelian. Recall that the category $\FrVec(\TT_\theta^\tau)$ introduced before Lemma \ref{FrVect} was defined as the extension closed subcategory of $\Vec(\TT_\theta^\tau)$ generated by the standard modules $E_1^z$. It follows from Proposition 2.1 part (b) of \cite{PolSch} that the classes $[E_1^z]$ and $[E_1^{z'}]$ are equivalent in $\K_0(\Vec(\TT_\theta^\tau))$ if and only if $z$ and $z'$ denote the same point in $X_\tau = \CC/(\ZZ+\tau\ZZ)$. So the classes $[E_1^z]$ account for the $\Pic^0(X_\tau)\cong X_\tau$ part of $\K_0(\Vec(\TT_\theta^\tau))$ and their extensions account for the full Picard group $\Pic(X_\tau)$.

\begin{lem}
The category $\FrVec(\TT_\theta^\tau)$ is a full subcategory of $\ssz(0,\TT_\theta^\tau)$.
\end{lem}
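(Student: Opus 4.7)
The plan is to verify that every object of $\FrVec(\TT_\theta^\tau)$ lies in $\ssz(0,\TT_\theta^\tau)$; fullness is then automatic, since both are full subcategories of $\Vec(\TT_\theta^\tau)$, so any morphism between objects of $\FrVec(\TT_\theta^\tau)$ that lives in $\ssz(0,\TT_\theta^\tau)$ already lives in $\FrVec(\TT_\theta^\tau)$.

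First I would check the claim on the generators. Each $E_1^z = (\A_\theta,\nabla_z)$ is the standard bundle with $(m,n)=(0,1)$, so $\deg(E_1^z)=0$ and $\mu(E_1^z)=0$, matching phase $1/2$ with respect to the stability function $\mathcal{Z}([E^z_{m,n}]) = -m + i(m\theta+n)$ introduced above. Moreover, by the description of standard bundles as preimages of stable objects in $D^b(X_\tau)$ under $\mathcal{S}_\tau$ recalled in Subsection \ref{StandBun}, each $E_1^z$ is stable, hence in particular semistable. Therefore $E_1^z\in \ssz(0,\TT_\theta^\tau)$ for every $z\in\CC$.

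Next I would invoke the fact that for a fixed phase the slice $\mathcal{P}(\phi)$ of any stability condition is an abelian subcategory closed under extensions (Lemma 5.2 of \cite{Bri1}), together with the existence of a Harder--Narasimhan filtration for holomorphic bundles on $\TT_\theta^\tau$ (Proposition 2.8.3 of \cite{Pol6}), which is what was used above to produce the stability condition in the first place. In particular $\ssz(0,\TT_\theta^\tau)=\mathcal{P}(1/2)$ is closed under extensions inside $\Vec(\TT_\theta^\tau)$. Since $\FrVec(\TT_\theta^\tau)$ is, by definition, the extension closure of the family $\{E_1^z\}_{z\in\CC}$ inside $\Vec(\TT_\theta^\tau)$, and every generator lies in $\ssz(0,\TT_\theta^\tau)$, a straightforward induction on the length of an iterated extension places every object of $\FrVec(\TT_\theta^\tau)$ inside $\ssz(0,\TT_\theta^\tau)$.

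There is no genuine obstacle; the only point worth being careful about is that the relevant ``extension closure'' statement is about the ambient abelian category $\Vec(\TT_\theta^\tau)$ and not about some derived enlargement, so one must ensure that the Harder--Narasimhan / slope formalism is applied in $\Vec(\TT_\theta^\tau)$ itself, which is precisely the content of \cite{Pol6} cited in Subsection \ref{semistable}. Once this is in place, fullness of the inclusion $\FrVec(\TT_\theta^\tau)\hookrightarrow \ssz(0,\TT_\theta^\tau)$ is immediate from transitivity of fullness among full subcategories of $\Vec(\TT_\theta^\tau)$.
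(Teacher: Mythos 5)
Your proof is correct and follows essentially the same route as the paper: show the generators $E_1^z$ are degree-zero semistable and then use that the slice $\mathcal{P}(1/2)$ is closed under extensions to cover all of the extension closure $\FrVec(\TT_\theta^\tau)$. The only (minor) difference is the justification of semistability of $E_1^z$: the paper deduces it from indecomposability together with the Calabi--Yau property of $D^b(X_\tau)$ (Theorem 9.1 of \cite{Bri1}), which yields semistability with respect to \emph{every} stability condition, whereas you appeal to $\mathcal{S}_\tau(E_1^z)$ being a stable point sheaf --- which also works, but implicitly uses that the chosen stability condition lies in the $\widetilde{GL}_2^+(\RR)$-orbit of the standard one on $D^b(X_\tau)$.
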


\begin{proof}
Since $E_1^z$ are indecomposable objects, the Calabi--Yau property of $D^b(X_\tau)$ shows that they must be semistable with respect to any stability condition ({\it e.g.,} Theorem 9.1 of \cite{Bri1}). Moreover, being degree $0$ objects with respect to the above stability condition they belong to $\ssz(0,\TT_\theta^\tau)$. Consequently, the objects obtained by extensions from them are also in $\ssz(0,\TT_\theta^\tau)$ which proves our assertion.
\end{proof}

\begin{rem}
The action of $\widetilde{GL}_2^+(\RR)$ on the stability manifold of an elliptic curve is transitive and it simply relabels the phases of semistable objects without altering them. So given any other choice of a stability condition on $D^b(X_\tau)$ one may apply an element of $\widetilde{GL}_2^+(\RR)$ to bring it to the form of our chosen stability condition $\mathcal{Z}$. 
\end{rem}

\subsection{Nori finite bundles over $\TT_\theta^\tau$}

The aim of this subsection is to identify the Nori finite bundles over $\TT_\theta^\tau$ and construct a Tannakian category structure on them. 

Let us begin by motivating our choice of $\ZZ$ as a proposed topological fundamental group of $\TT_\theta$. It was shown in \cite{ManMar,CDS} that noncommutative tori appear at the boundary $\mathbb{P}^1(\RR)/SL(2,\ZZ)$ of the moduli space of elliptic curves. At the rational points, which form to a single orbit under $SL(2,\ZZ)$-action, one has a degenerate elliptic curve, which is topologically equivalent to $\CC^*$. It is natural to put the degenerate elliptic curves and the noncommutative tori homotopically on the same footing. Let us consider more closely the case of rational $\theta$, say $\theta=p/q$. Then, the action of $\ZZ$ factors through the finite group $\ZZ_q$ and the crossed product $C^\infty(\SS^1) \rtimes_{p/q} \ZZ_q$ is Morita equivalent to $C^\infty(\SS^1/\ZZ_q) \simeq C^\infty(S^1)$ \cite{Rie74}. We consider the degenerate case with $\theta$ an irrational real number as homotopically equivalent to the finite group case.  
Therefore, the topological fundamental group of $\TT_\theta$ is expected to be $\ZZ$, and we thus look for a Tannakian subcategory of $\Vec(\TT_\theta^\tau)$ that is equivalent to $\Rep(\ZZ)$.

\begin{thm}
The category $\BB^\tau$ is a full subcategory of $\Vec(\TT_\theta^\tau)$.
\end{thm}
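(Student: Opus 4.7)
I will aim to show that the functor $\psi_* : \BB^\tau \to \Vec(\TT_\theta^\tau)$ of Remark \ref{functorBtau} is fully faithful; then its essential image is the desired full subcategory. Faithfulness is already in hand (the Corollary following Proposition \ref{ourfunctor}), so the entire task reduces to proving fullness.

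\medskip

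\noindent
\emph{Preparation by normal forms.} First I would invoke part (a) of Proposition \ref{modularity} together with Lemma \ref{lem:tau-width} to replace $\tau$ by $g\tau$ for a suitable $g\in SL(2,\ZZ)$ and so assume $\rwd(\tau) < 1$; the source and target categories are unchanged up to equivalence. Next I would apply the $\BB^\tau$-analogue of Proposition \ref{ConnMatrix}, obtained by simply dropping the equivariance datum $\sigma$ from the proof: the shearing argument of Wasow normalises the connection matrix alone, so every object of $\BB^\tau$ becomes isomorphic to an object of the shape $(V \otimes \sheaf(\CC^*), \delta + A)$ with $A \in \End_\CC(V)$ a constant matrix all of whose eigenvalues lie in the chosen transversal
\[
\trans = \{ z \in \CC : 0 \leq \Re(z/\tau) < 1 \}
\]
of $\tau\ZZ$. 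Under $\psi_*$ such an object becomes $(V \otimes \A_\theta,\, \delta_\tau + 2\pi i A)$.

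\medskip

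\noindent
\emph{Fourier analysis of morphisms.} Fix two such normalised objects $M_i = (V_i \otimes \sheaf(\CC^*), \delta + A_i)$. Because $\psi_*M_i$ is free as a right $\A_\theta$-module, any morphism $\Phi : \psi_* M_1 \to \psi_* M_2$ in $\Vec(\TT_\theta^\tau)$ is determined by a matrix $\Phi \in \Hom_\CC(V_1, V_2) \otimes \A_\theta$, and the holomorphicity condition becomes $\delta_\tau(\Phi) = 2\pi i\,(\Phi A_1 - A_2 \Phi)$. Writing the Fourier expansion $\Phi = \sum_{(n_1,n_2)\in\ZZ^2} \Phi^{n_1,n_2} U_1^{n_1} U_2^{n_2}$ with $\Phi^{n_1,n_2} \in \Hom_\CC(V_1,V_2)$ and using $\delta_\tau(U_1^{n_1}U_2^{n_2}) = 2\pi i(\tau n_1 + n_2)\,U_1^{n_1}U_2^{n_2}$, the condition is equivalent to the Fourier-mode system
\[
\bigl(A_2 + (\tau n_1 + n_2)\,I\bigr)\Phi^{n_1,n_2} = \Phi^{n_1,n_2}\, A_1 \qquad \text{for all } (n_1,n_2)\in\ZZ^2.
\]
Whenever $\Phi^{n_1,n_2}\neq 0$, $A_1$ and $A_2+(\tau n_1+n_2)I$ share a common eigenvalue, so there exist eigenvalues $\lambda$ of $A_1$ and $\mu$ of $A_2$, both lying in $\trans$, with $\lambda - \mu = \tau n_1 + n_2$.

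\medskip

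\noindent
\emph{Conclusion and main obstacle.} The heart of the proof — and the step I expect to require the most care — is to use the transversal condition together with $\rwd(\tau)<1$ to force $(n_1,n_2) = (0,0)$. The width bound was tailored so that $\trans \cap (\trans + n) = \emptyset$ for every nonzero integer $n$: indeed, a simple computation with $\Re(z/\tau)\in[0,1)$ shows that two elements of $\trans$ cannot differ by a nonzero integer. Combined with the fact that $\lambda,\mu\in\trans$ are their own $\tau\ZZ$-coset representatives, this rules out the relation $\lambda - \mu = \tau n_1 + n_2$ except in the trivial case. Once this is established, $\Phi = \Phi^{0,0}$ is a constant matrix satisfying $A_2 \Phi^{0,0} = \Phi^{0,0} A_1$, which is exactly a morphism $\phi : M_1 \to M_2$ in $\BB^\tau$ (in the chosen normal form) with $\psi_*(\phi) = \Phi$. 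Together with faithfulness, this shows that $\psi_*$ realises $\BB^\tau$ as a full subcategory of $\Vec(\TT_\theta^\tau)$.
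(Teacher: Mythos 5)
Your proposal follows essentially the same route as the paper: normalise $\tau$ via Proposition \ref{modularity} and Lemma \ref{lem:tau-width} so that $\rwd(\tau)<1$, put objects of $\BB^\tau$ into constant normal form with eigenvalues in a fixed transversal (the $\sigma$-free version of Proposition \ref{ConnMatrix}), Fourier-expand a morphism in $\Vec(\TT_\theta^\tau)$ in the $U_1^{n_1}U_2^{n_2}$, and use the common-eigenvalue constraint plus the transversal/width condition to kill all modes except $(0,0)$. The argument is correct and matches the paper's proof in all essential respects.
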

\begin{proof}
Since by Proposition \ref{modularity} $\Vec(\TT_\theta^\tau)$ and $\Vec(\TT_\theta^{g\tau})$ are equivalent for any $g \in SL(2,\ZZ)$, we can assume by Lemma \ref{lem:tau-width} that $\tau$ satisfies $\rwd(\tau)<1$. As observed in Remark \ref{functorBtau}, Proposition \ref{ourfunctor} actually defines a functor from $\BB^\tau$ to $\Vec(\TT_\theta^\tau)$. We also denote it by $\psi_*$. 
For $\BB^\tau$ there is an analogue of Proposition 18: for each object $(M,\nabla)$ in $\BB^\tau$ there is an isomorphic object $(V \otimes \sheaf(\CC^*), \delta + A)$ with $V$ a vector space and $A$ a constant matrix with all eigenvalues in the same transversal to $\tau \ZZ$. In fact, $\BB^\tau$ can be identified with the full subcategory of $\BB^\tau_q$ consisting of those objects with trivial $\ZZ$-action so that the argument in the proof of Prop. 18 applies directly. The proof of Theorem 19 then further implies that $\Hom_{\BB^\tau} ( (V \otimes \sheaf(\CC^*), \delta +A) , (V' \otimes \sheaf(\CC^*), \delta +A') )$ can be identified with the space of linear maps $\phi : V \to V'$ that intertwine $A$ and $A'$, {i.e.} so that $\phi A = A' \phi$. 

Next, consider morphisms in the image (under $\psi_*$) of $\BB^\tau$ inside $\Vec(\TT_\theta^\tau)$. The image of the object $(V\otimes \sheaf(\CC^*), \delta + A)$ is clearly $(V \otimes \A_\theta, \delta_\tau + 2\pi i A)$ (note the factor of $2 \pi i$). A morphism between the objects $(V\otimes  \A_\theta, \delta_\tau + 2\pi i A)$ and $(V'\otimes  \A_\theta, \delta_\tau + 2\pi i A')$ is then given as a map $\phi: V 
\to V' \otimes A_\theta$ that satisfies:
$$
\phi \circ ( \delta_\tau + 2 \pi i A) = ( \delta_\tau + 2 \pi i A')\circ \phi.
$$
We can decompose $\phi$ as follows: $\phi= \sum_{n_1,n_2 \in \ZZ} \phi_{n_1 n_2} U_1^{n_1} U_2^{n_2}$ and comparing powers of $U_i$ on both sides of the previous equation yields:
$$
 \phi_{n_1 n_2} A = ( ( \tau n_1 + n_2) + A') \phi_{n_1n_2}
$$
where we factored out a factor of $2 \pi i$ (cf. the definition of $\delta_\tau$). 

As before, this equation implies that $( \tau n_1 + n_2) + A'$ and $A$ have at least one eigenvalue in common. However, this is impossible (unless $n_1=n_2=0$) since the eigenvalues of $A$ and $A'$ lie in the same transversal: adding $\tau n_1 + n_2$ will translate every eigenvalue out of the transversal by our assumption $\rwd(\tau)<1$. Thus, $\phi_{n_1n_2}=0$ unless $n_1=n_2=0$, so that also a morphism in the image of $\BB^\tau$ in $\Vec(\TT_\theta^\tau)$ is given by a constant matrix $\phi \in \Hom_\C(V,V')$ satisfying $\phi  A = A' \phi$. 
\end{proof}
Actually, this shows that $\psi_*: \BB^\tau \to \Vec(\TT_\theta^\tau)$ is a full embedding. Moreover, the choice of a transversal $\trans$ to $\tau\ZZ$ yields a functor from $\Rep(\ZZ)$ to $\BB^\tau$, which -- by the Riemann--Hilbert correspondence -- is an equivalence of tensor categories. We denote this functor again by $F_\trans$ and via the composed functor $\psi_*\circ\F_\trans:\Rep(\ZZ)\to\Vec(\TT_\theta^\tau)$ we obtain some holomorphic bundles over $\TT_\theta^\tau$. These bundles are of degree 0, since the image of $\phi_* \circ F_\trans$ is 
contained in $\FrVec(\TT_\theta^\tau)$ which is a subcategory of $\ssz(0,\TT_\theta^\tau)$. 
This is our sought-after Tannakian subcategory in $\Vec(\TT_\theta^\tau)$, which is equivalent to $\Rep(\ZZ)$. 

This leads us to a proposal for the \'etale fundamental group of the noncommutative torus as follows. Recall that a holomorphic bundle $E$ over a smooth projective variety is called {\it finite} if there are two distinct polynomials $f$ and $g$ with non-negative integral coefficients such that $f(V)$ and $g(V)$ are isomorphic. In characteristic zero the Nori finite bundles are equivalent to finite bundles in the above sense. Over a smooth quasiprojective variety a holomorphic vector bundle obtained from a representation of its topological fundamental group that factors through a finite group is Nori finite \cite{Nor2}. The converse is also true, with some additional assumptions, over smooth quasiprojective varieties (see Theorem 3.3 \cite{BHS} for the precise statement). For a more elaborate treatment of Nori finite bundles over quasiprojective varieties in characteristic zero we refer the readers to \cite{EsnHai}. Motivated by this result we call the objects in the image of $\psi_*\circ\F_\trans$ which factor through a finite group representation {\it Nori finite} over $\TT_\theta^\tau$ and denote the full subcategory of $\Vec(\TT_\theta^\tau)$ that they comprise by $\NF(\TT_\theta^\tau)$. As a result of a Theorem proved by Nori in \cite{Nor2}, the category $\NF(\TT_\theta^\tau)$ is Tannakian.

\begin{rem}
That $\NF(\TT_\theta^\tau)$ is Tannakian can also be seen directly. Let $f$ be a morphism in $\NF(\TT_\theta^\tau)$ and up to isomorphism we may assume that it is of the form $f:(\A_\theta^{n_1},\delta_\tau + A_1)\to(\A_\theta^{n_2},\delta_\tau + A_2)$, where $A_i,$ $i=1,2$ are complex upper triangular matrices of appropriate sizes. The fact that they are in $\NF(\TT_\theta^\tau)$ means that all the entries are a rational numbers with all eigenvalues in the same transversal. It can be checked that the kernel and the cokernel of $f$ defined in $\Vec(\TT_\theta^\tau)$ will also have connection matrices with rational entries, which can be brought to the desired form up to isomorphism by suitable base changes. The rigid tensor structure on $\BB^\tau$ induces a rigid tensor structure on $\NF(\TT_\theta^\tau)$ with identity object $(\A_\theta,\delta_\tau)$. Moreover, that $\End(\A_\theta,\delta_\tau)=\CC$ can be seen from the fact that its image in $D^b(X_\tau)$ under the Polishchuk--Schwarz equivalence is the stable object $\sheaf_{z=0}$. Finally, the fibre functor on $\BB^\tau$ restricts to $\NF(\TT_\theta^\tau)$ turning the latter into a Tannakian category. 

\end{rem}

Since $\NF(\TT_\theta^\tau)$ is a subcategory of $\FrVec(\TT_\theta^\tau)$ its image under the functor $\mathcal{S}_\tau\circ\psi_*$ lies inside the category of torsion sheaves over $X_\tau$. On the other hand the category of Nori finite bundles of the complex elliptic curve $X_\tau$, denoted by $\NF(X_\tau)$, is also a subcategory of its category of semistable bundles of degree zero. For any $\mu\in\QQ$ it is known that the category of semistable bundles of slope $\mu$ over $X_\tau$ is equivalent to the category of torsion sheaves over $X_\tau$ by some Fourier--Mukai transform. In this specific case the category of semistable bundles of degree $0$ (hence slope $0$) can be made equivalent to the category of torsion sheaves by applying a specific Seidel--Thomas twist functor (for the details see \cite{SeiTho}). The effect of this functor on $(\deg\,\,\,\rk)^t$, viewed as a column vector, is simply left multiplication by the matrix $\left(\begin{smallmatrix}
0 & 1\\
-1 & 0 \\
\end{smallmatrix}\right)$, {\it i.e,} degree $0$ bundles become rank $0$ torsion sheaves. Therefore, $\NF(\TT_\theta^\tau)$ and $\NF(X_\tau)$ can both be viewed as subcategories of the category of torsion sheaves of $X_\tau$. The complex points of the Tannakian group of $\NF(X_\tau)$ is a isomorphic to $\hat{\ZZ}\times\hat{\ZZ}$, while that of $\NF(\TT_\theta^\tau)$ is simply $\hat{\ZZ}$. It is not very surprising since over an algebraically closed field of characteristic zero, Nori's fundamental group scheme is the same as Grothendieck's \'etale fundamental group (ignoring the difference that the former is a pro-group scheme and the latter is only an abstract group). The pinching of one of the homology cycles at the rational boundary of the modular curve (and possibly its deformation for noncommutative tori) accounts for the lack of one copy of $\hat{\ZZ}$ in the \'etale fundamental group of $\TT_\theta^\tau$. It conforms with the fact that the noncommutative torus is not homotopy equivalent to the commutative one, as seen by the difference in their ordered $\K_0$-groups.





\end{document}